\documentclass[a4paper, 12pt]{amsart}
\usepackage{geometry}
\usepackage{amsthm,amssymb,amsmath,hyperref, array}
\usepackage{color, mdframed}
\usepackage{tikz}
\usetikzlibrary{decorations.pathreplacing}
\usepackage[utf8]{inputenc}
\usepackage{graphicx,enumerate}
\usepackage{mathtools,bm}
\usepackage{enumitem, booktabs}

\makeatletter
\def\l@section{\@tocline{1}{12pt plus2pt}{0pt}{}{\bfseries}}
\def\l@subsection{\@tocline{2}{0pt}{2pc}{2pc}{}}
\makeatother
%
\setcounter{secnumdepth}{4}
\makeatletter
\def\subsection{\@startsection{subsection}{2}{\z@}%
	{-3.25ex\@plus -1ex \@minus -.2ex}%
	{1.5ex \@plus .2ex}%
	{\normalfont\bfseries\boldmath}}
\def\subsubsection{\@startsection{subsubsection}{3}%
	\z@{.5\linespacing\@plus.7\linespacing}{-.5em}%
	{\normalfont\bfseries\boldmath}}
\renewcommand\paragraph{\@startsection{paragraph}{4}{\z@}%
	{3.25ex \@plus1ex \@minus.2ex}%
	{-1em}%
	{\normalfont\normalsize\bfseries}}
\makeatother


\vfuzz2pt 
\hfuzz2pt 
\theoremstyle{plain}
\newtheorem{thm}{Theorem}[section]

\newtheorem{lem}[thm]{Lemma}

\theoremstyle{definition}

\theoremstyle{remark}
\newtheorem{rem}[thm]{Remark}

\theoremstyle{plain}
\newtheorem{conj}[thm]{Conjecture}
\numberwithin{equation}{section}

\newtheorem{opques}[thm]{Open Question}

\theoremstyle{plain} 
\newcommand{\thistheoremname}{}
\newtheorem{genericthm}[thm]{\thistheoremname}
\newenvironment{namedthm}[1]
  {\renewcommand{\thistheoremname}{#1}%
   \begin{genericthm}}
  {\end{genericthm}}

  \newtheorem*{genericthm*}{\thistheoremname}
\newenvironment{namedthm*}[1]
  {\renewcommand{\thistheoremname}{#1}%
   \begin{genericthm*}}
  {\end{genericthm*}}


\newcommand{\del}{\delta}

\newcommand{\D}{{\mathbb D}}

\newcommand{\calB}{{\mathcal B}}

\newcommand{\calD}{{\mathcal D}}

\newcommand{\calM}{{\mathcal M }}
\newcommand{\calN}{{\mathcal N}}

\newcommand{\calQ}{{\mathcal Q}}

\newcommand{\calT}{{\mathcal T}}

\makeatletter
\newcommand{\vast}{\bBigg@{4}}
\newcommand{\Vast}{\bBigg@{5}}
\makeatother

\newcommand{\frakT}{{\mathfrak T}}

\def\udot#1{\ifmmode\oalign{$#1$\crcr\hidewidth.\hidewidth
    }\else\oalign{#1\crcr\hidewidth.\hidewidth}\fi}

\def\beq{\begin{equation}}
\def\eeq{\end{equation}}
\makeatletter
\newcommand{\doublewidetilde}[1]{{%
  \mathpalette\double@widetilde{#1}%
}}
\newcommand{\double@widetilde}[2]{%
  \sbox\z@{$\m@th#1\widetilde{#2}$}%
  \ht\z@=.9\ht\z@
  \widetilde{\box\z@}%
}
\makeatother

\def\one{\mbox{1\hspace{-4.25pt}\fontsize{12}{14.4}\selectfont\textrm{1}}}

\usepackage{tikz}

\makeatletter
\def\@makefnmark{%
  \leavevmode
  \raise.9ex\hbox{\fontsize\sf@size\z@\normalfont\tiny\@thefnmark}}
\makeatother

\geometry{top=1in, bottom= 1in, left=1in, right=1in}

\begin{document}
	
\title[Near-endpoints Carleson embeddings]{Near-endpoints Carleson Embedding of $\calQ_s$ and $F(p, q, s)$ into tent spaces}

\author{Bingyang Hu}
\address{Department of Mathematics and Statistics\\
         Auburn University\\
         Auburn, Alabama, U.S.A, 36849}
\email{bzh0108@auburn.edu}

\author{Xiaojing Zhou}
\address{Department of Mathematics\\
         Shantou University\\
         Shantou, Guangdong, China, 515063}
\email{xjzhou2023@163.com}

\begin{abstract}
This paper aims to study the $\calQ_s$ and $F(p, q, s)$ Carleson embedding problems near endpoints. We first show that for $0<t<s \le 1$, $\mu$ is an $s$-Carleson measure if and only if $id: \calQ_t \mapsto \calT_{s, 2}^2(\mu)$ is bounded. Using the same idea, we also prove a near-endpoints Carleson embedding for $F(p, p\alpha-2, s)$ for $\alpha>1$. Our method is different from the previously known approach which involves a delicate study of Carleson measures (or logarithmic Carleson measures) on weighted Dirichlet spaces.
As some byproducts, the corresponding compactness results are also achieved. Finally, we compare our approach with the existing solutions of Carleson embedding problems proposed by Xiao, Pau, Zhao, Zhu, etc.  

Our results assert that a ``tiny-perturbed" version of a conjecture on the $\calQ_s$ Carleson embedding problem due to Liu, Lou, and Zhu is true. Moreover, we answer an open question by Pau and Zhao on the $F(p, q, s)$ Carleson embedding near endpoints.

\end{abstract}
\date{\today}
\subjclass [2010] {30H05, 30H25, 30H30.}
\keywords{Near-Endpoints Carleson embedding, Bloch spaces, $\alpha$-Bloch spaces, $\calQ_s$ spaces, $F(p, q, s)$ spaces, tent spaces, Carleson measures}

\thanks{}

\maketitle


\section{Introduction} \label{20240627sec01}
The current paper is motivated by the study of the embedding problems between $\calQ_s$ spaces and tent spaces via Carleson measures. More precisely, we are interested in the following conjecture. 

\begin{conj}[{\cite{LLZ2017}}] \label{20240614conj01}
    For any $0<s<1$ and $\mu$ being a positive Borel measure on the open unit disc $\D$, $id: \calQ_s \mapsto \calT_{s, 2}^2(\mu)$ is bounded if and only if $\mu$ is an $s$-Carleson measure.
\end{conj}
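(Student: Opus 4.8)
The plan is to establish the two implications separately; the ``only if'' direction is insensitive to the critical exponent, while the ``if'' direction is where the endpoint nature bites. For ``only if'' I would argue by testing. Given an arc $I \subset \partial\D$ with center $\zeta_I$, put $a = (1-|I|)\zeta_I$ and consider the normalized reproducing kernel
\[
f_a(z) = \frac{1-|a|^2}{1-\bar a z}, \qquad z \in \D.
\]
A routine estimate of $f_a'$ against the weight $(1-|\varphi_a|^2)^s$ gives $\sup_a \|f_a\|_{\calQ_s} \lesssim 1$, while $|f_a(z)| \gtrsim 1$ throughout the Carleson box $S(I)$. If $id:\calQ_s \to \calT_{s,2}^2(\mu)$ is bounded, then reading off the contribution of the box $S(I)$ to the tent-space norm yields
\[
\frac{\mu(S(I))}{|I|^s} \lesssim \|f_a\|_{\calT_{s,2}^2(\mu)}^2 \lesssim \|f_a\|_{\calQ_s}^2 \lesssim 1,
\]
uniformly in $I$, which is precisely the $s$-Carleson condition.

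For the ``if'' direction -- the substance of the conjecture -- assume $\mu$ is $s$-Carleson and seek $\|f\|_{\calT_{s,2}^2(\mu)} \lesssim \|f\|_{\calQ_s}$. I would invoke the Aulaskari--Stegenga--Xiao characterization: $f \in \calQ_s$ exactly when $d\nu_f := |f'(z)|^2 (1-|z|^2)^s\, dA(z)$ is an $s$-Carleson measure, with $\nu_f(S(I)) \lesssim \|f\|_{\calQ_s}^2 |I|^s$. The strategy is a localized oscillation decomposition: over each box $S(I)$ split $f = f(z_I) + \big(f - f(z_I)\big)$ at the hyperbolic center $z_I$, bound the constant part via the growth estimate for $\calQ_s$-functions together with $\mu(S(I)) \lesssim |I|^s$, and reduce the oscillation part to estimating $\int_{S(I)} |f(z) - f(z_I)|^2 \, d\mu(z)$. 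The latter I would attack by expressing the oscillation through $f'$ and pairing the two $s$-Carleson measures $\mu$ and $\nu_f$ along a stopping-time (Calder\'on--Zygmund) subdivision of $I$, summing the local interactions generation by generation.

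The decisive difficulty is exactly the coincidence $t = s$. In the perturbed regime $t < s$ one integrates $f'$ against the stronger weight $(1-|\varphi_a|^2)^t$ and thereby gains a factor $(1-|z|)^{s-t}$, which across the dyadic generations of the stopping tree becomes a summable geometric series $\sum_k 2^{-k(s-t)}$; this strictly positive gap is what makes the $\mu$--$\nu_f$ pairing converge and is the mechanism behind the $t<s$ embedding proved earlier in this paper. At $t = s$ the same bookkeeping produces the divergent sum $\sum_k 1$, i.e. a logarithmic loss that the perturbative argument cannot absorb, and the dual route through Carleson measures on weighted Dirichlet spaces runs into the identical borderline. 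Closing the gap would require a genuinely critical self-improvement: either a John--Nirenberg-type exponential integrability for $\calQ_s$-oscillations measured against $s$-Carleson mass, or a sharp good-$\lambda$ re-summation that uses the exact geometry of the weight $(1-|\varphi_a|^2)^s$ to convert the borderline sum into a bounded one. I expect manufacturing this endpoint self-improvement -- rather than any single computation -- to be the main obstacle, and it is precisely where the methods that settle $t<s$ degenerate.
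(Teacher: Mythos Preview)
First, a framing issue: the statement you are addressing is a \emph{conjecture}. The paper does not prove it; what is proved is the near-endpoints Theorem~\ref{20240613thm02} (the embedding $\calQ_t \hookrightarrow \calT_{s,2}^2(\mu)$ for $t<s$). Your own proposal in fact acknowledges this in the second half: you correctly diagnose that at $t=s$ the perturbative mechanism (the geometric gain $(1-|z|)^{s-t}$) collapses into a divergent sum, and that some endpoint self-improvement would be needed. That assessment is accurate and matches the paper's discussion of why the known approaches fail for $0<s<1$. So for the ``if'' direction there is nothing to compare against --- the paper does not have a proof either.

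For the ``only if'' direction, however, your argument contains a genuine error. The tent-space norm $\calT_{s,2}^2(\mu)$ carries the weight $\bigl(\log\frac{2}{|I|}\bigr)^{-2}$ in its definition (see \eqref{20240614cond01}). Your test function $f_a(z)=(1-|a|^2)/(1-\bar a z)$ satisfies $|f_a|\simeq 1$ on $S(I)$, so testing gives only
\[
\frac{\mu(S(I))}{|I|^s\bigl(\log\frac{2}{|I|}\bigr)^2}\lesssim \|f_a\|_{\calT_{s,2}^2(\mu)}^2\lesssim 1,
\]
i.e.\ $\mu(S(I))\lesssim |I|^s\bigl(\log\frac{2}{|I|}\bigr)^2$, which is strictly weaker than the $s$-Carleson condition. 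The remedy is to use the logarithmic kernel $f_{a_I}(z)=\log\frac{2}{1-\overline{a_I}z}$ (this is exactly what the paper does in proving the necessity in Theorem~\ref{20240613thm02}, and that argument works verbatim at $t=s$): one still has $\sup_I\|f_{a_I}\|_{\calQ_s}\lesssim 1$, but now $|f_{a_I}(z)|\simeq\log\frac{2}{|I|}$ on $S(I)$, and this extra logarithm cancels the one in the tent-space weight to yield $\mu(S(I))/|I|^s\lesssim 1$ as required.
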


Our first main result is the following, which asserts that the above conjecture holds under a small perturbation. 

\begin{thm} \label{20240613thm02}
For any $0<t<s \le 1$ and $\mu$ being a positive Borel measure on $\D$, $id: \calQ_t \mapsto \calT_{s, 2}^2(\mu)$ is bounded if and only if $\mu$ is an $s$-Carleson measure. 
\end{thm}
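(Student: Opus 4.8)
The plan is to reduce the theorem to a purely scalar statement about ordinary Carleson measures for $\calQ_t$, and then to prove that statement by a direct dyadic argument in which the strict gap $s-t>0$ does the work that a logarithmic correction would otherwise be forced to do. The necessity of the $s$-Carleson condition is the routine direction: if $id\colon\calQ_t\to\calT_{s,2}^2(\mu)$ is bounded, one tests against a normalised family (the constants and, more generally, suitably normalised powers $\big(\tfrac{1-|a|^2}{1-\bar a z}\big)^{\gamma}$ with $\gamma$ small) and reads off $\mu(S(I))\lesssim|I|^{s}$ for every Carleson box $S(I)$, exactly as in the work of Xiao and Pau--Zhao. For the converse, unwinding the definition of $\calT_{s,2}^2(\mu)$ turns boundedness of $id$ into a uniform family of estimates $\int_{\D}|f(z)|^2\,d\lambda(z)\lesssim\|f\|_{\calQ_t}^2$ $(f\in\calQ_t)$, one for each of a family of measures built from $\mu$, and the point is that every such $\lambda$ is again $s$-Carleson with constant controlled by that of $\mu$ (here one uses that the weight attached to $\mu$ in the tent norm is bounded by $1$). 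Thus everything reduces to the following scalar fact, which is the heart of the matter: \emph{if $0<t<s\le 1$, then every $s$-Carleson measure $\lambda$ is a Carleson measure for $\calQ_t$, with $\int_{\D}|f|^2\,d\lambda\lesssim\|\lambda\|_{s}\,\|f\|_{\calQ_t}^2$, where $\|\lambda\|_{s}=\sup_I|I|^{-s}\lambda(S(I))$.} For $t=s$ this is false — the Carleson measures for $\calQ_s$ satisfy a strictly stronger, logarithmically corrected condition, which is exactly the obstruction in Conjecture~\ref{20240614conj01} — so this route sidesteps the delicate study of (logarithmic) Carleson measures on weighted Dirichlet spaces.

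To prove the scalar fact, fix a Whitney/dyadic decomposition $\{Q\}$ of $\D$, with $\ell_Q\asymp 1-|z|\asymp\operatorname{dist}(Q,\partial\D)$ for $z\in Q$, organised as a tree under inclusion of the associated Carleson boxes $S(I_Q)$; write $c_Q$ for the centre of $Q$ and $\alpha_Q^2:=\int_{Q^{*}}|f'|^2\,dA$ for a fixed dilate $Q^{*}\supset Q$ (the dilates having bounded overlap). Two inputs are needed. First, the standard equivalence $\|f\|_{\calQ_t}^2\asymp|f(0)|^2+\sup_I|I|^{-t}\int_{S(I)}|f'(z)|^2(1-|z|^2)^t\,dA(z)$ gives, for every dyadic $R$,
\[
\sum_{Q\subseteq R}\ell_Q^{\,t}\,\alpha_Q^2\ \lesssim\ \|f\|_{\calQ_t}^2\,\ell_R^{\,t},\qquad\text{in particular}\qquad\sum_Q\ell_Q^{\,t}\alpha_Q^2\ \lesssim\ \|f\|_{\calQ_t}^2 .
\]
Second, $\lambda(S(I_Q))\le\|\lambda\|_s\,\ell_Q^{\,s}$ for every $Q$, hence $\sum_{Q\subseteq R}\lambda(Q)=\lambda(S(I_R))\le\|\lambda\|_s\,\ell_R^{\,s}$. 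Elementary estimates for analytic functions on Whitney squares give $\sup_Q|f|^2\lesssim|f(c_Q)|^2+\alpha_Q^2$ and $|f(c_{Q'})-f(c_{Q''})|\lesssim\alpha_{Q'}$ whenever $Q''$ is the dyadic parent of $Q'$; telescoping along the chain of ancestors of $Q$ (and using $|f(0)|\le\|f\|_{\calQ_t}$ at the root) yields $|f(c_Q)|\lesssim\|f\|_{\calQ_t}+\sum_{Q'\supseteq Q}\alpha_{Q'}$, so that
\[
\int_{\D}|f|^2\,d\lambda\ \le\ \sum_Q\lambda(Q)\sup_Q|f|^2\ \lesssim\ \|f\|_{\calQ_t}^2\sum_Q\lambda(Q)\ +\ \sum_Q\lambda(Q)\Big(\sum_{Q'\supseteq Q}\alpha_{Q'}\Big)^2 .
\]

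The first term is $\lesssim\|f\|_{\calQ_t}^2\,\lambda(\D)\lesssim\|\lambda\|_s\|f\|_{\calQ_t}^2$. The second is the crucial one, and it is handled by a weighted Cauchy--Schwarz along each ancestor chain with the weight $\ell_{Q'}^{-(s-t)}$: because $s-t>0$, the dual sum $\sum_{Q'\supseteq Q}\ell_{Q'}^{\,s-t}$ is a convergent geometric progression along the chain, of size $\asymp1$ uniformly in $Q$ — precisely the step that would instead produce a factor $\asymp\log\tfrac1{\ell_Q}$ if $t=s$. Hence $\big(\sum_{Q'\supseteq Q}\alpha_{Q'}\big)^2\lesssim\sum_{Q'\supseteq Q}\ell_{Q'}^{-(s-t)}\alpha_{Q'}^2$, and exchanging the order of summation,
\[
\sum_Q\lambda(Q)\Big(\sum_{Q'\supseteq Q}\alpha_{Q'}\Big)^2\ \lesssim\ \sum_{Q'}\ell_{Q'}^{-(s-t)}\alpha_{Q'}^2\sum_{Q\subseteq Q'}\lambda(Q)\ =\ \sum_{Q'}\ell_{Q'}^{-(s-t)}\alpha_{Q'}^2\,\lambda\big(S(I_{Q'})\big)\ \le\ \|\lambda\|_s\sum_{Q'}\ell_{Q'}^{\,t}\alpha_{Q'}^2\ \lesssim\ \|\lambda\|_s\|f\|_{\calQ_t}^2 ,
\]
using $\lambda(S(I_{Q'}))\le\|\lambda\|_s\,\ell_{Q'}^{\,s}$ and then the first recorded input. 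This proves the scalar fact, hence the non-trivial implication in Theorem~\ref{20240613thm02}.

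The main obstacle is exactly this telescoping term: a priori it accumulates $\asymp\log\tfrac1{\ell_Q}$ oscillation increments and is only logarithmically controlled (this is why Conjecture~\ref{20240614conj01} is subtle), and it is the strict inequality $t<s$ — encoded in the weight $\ell^{-(s-t)}$ and, equally, in the elementary estimate $\ell_Q^{-t}\lambda(Q)\le\|\lambda\|_s\ell_Q^{\,s-t}\le\|\lambda\|_s$ — that turns the accumulation into a summable geometric series; the rest is bookkeeping with bounded-overlap dilates. The compactness counterpart follows from the same estimates by the usual localisation: writing $\mu=\mu|_{\{|z|\le r\}}+\mu|_{\{|z|>r\}}$, the first piece gives an approximable (hence compact) operator, while the second is controlled on bounded subsets of $\calQ_t$ by the tail constant $\sup_{|I|\le 1-r}|I|^{-s}\mu(S(I))$, which tends to $0$ as $r\to1$ precisely when $\mu$ is a vanishing $s$-Carleson measure. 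The same scheme, with $|f'|^p(1-|z|^2)^q$ in place of $|f'|^2(1-|z|^2)^t$, applies verbatim to $F(p,p\alpha-2,s)$.
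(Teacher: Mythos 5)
Your dyadic engine is sound, but the reduction that is supposed to connect it to Theorem \ref{20240613thm02} breaks down, so as written the sufficiency direction is not proved. Boundedness of $id:\calQ_t\to\calT_{s,2}^2(\mu)$ amounts to the family of estimates $\int_{\D}|f|^2\,d\lambda_I\lesssim\|f\|_{\calQ_t}^2$ with $\lambda_I:=|I|^{-s}\left(\log\frac{2}{|I|}\right)^{-2}\mu|_{S(I)}$, and your parenthetical justification that these $\lambda_I$ are uniformly $s$-Carleson because ``the weight attached to $\mu$ in the tent norm is bounded by $1$'' is false: the weight $|I|^{-s}\left(\log\frac{2}{|I|}\right)^{-2}$ tends to infinity as $|I|\to0$, and taking $J=I$ in \eqref{20240613eq01} gives $\|\lambda_I\|_{\mathcal{CM}_s}\gtrsim |I|^{-s}\left(\log\frac{2}{|I|}\right)^{-2}\mu(S(I))/|I|^{s}$, which for a typical $s$-Carleson $\mu$ is of size $\|\mu\|_{\mathcal{CM}_s}|I|^{-s}\left(\log\frac{2}{|I|}\right)^{-2}\to\infty$. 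Equivalently, applying your scalar fact to the restriction $\mu|_{S(I)}$ (whose $\mathcal{CM}_s$-norm is comparable to that of $\mu$, with no smallness in $|I|$) yields only $\int_{S(I)}|f|^2\,d\mu\lesssim\|\mu\|_{\mathcal{CM}_s}\|f\|_{\calQ_t}^2$, missing the decisive factor $|I|^s\left(\log\frac{2}{|I|}\right)^2$ required by \eqref{20240614cond01}: the scalar fact is a global embedding $\calQ_t\hookrightarrow L^2(\lambda)$, while the theorem demands a gain on every Carleson box, and no uniform-in-$I$ rescaling of $\mu|_{S(I)}$ converts one into the other. (A smaller issue: in the necessity direction, bounded test functions such as normalised powers only give $\mu(S(I))\lesssim|I|^s\left(\log\frac{2}{|I|}\right)^2$; to remove the logarithm one must test with $\log\frac{2}{1-\overline{a_I}z}$, whose size $\simeq\log\frac{2}{|I|}$ on $S(I)$ beats the logarithm in the denominator, as the paper does.)

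The repair is to run your tree argument locally instead of reducing to the global fact, and then it does prove the theorem by a route genuinely different from the paper's. Root the tree at the top box $Q_0$ with $I_{Q_0}=I$ and telescope only up to $c_{Q_0}$, estimating the root value by $|f(c_{Q_0})|\lesssim\|f\|_{\calB}\log\frac{2}{|I|}\lesssim\|f\|_{\calQ_t}\log\frac{2}{|I|}$; this is exactly where the $\left(\log\frac{2}{|I|}\right)^2$ in the tent norm is consumed, and it contributes $\lesssim\|\mu\|_{\mathcal{CM}_s}|I|^s\left(\log\frac{2}{|I|}\right)^2\|f\|_{\calQ_t}^2$. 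For the oscillation part the ancestor chain now stops at $Q_0$, so the dual sum in your weighted Cauchy--Schwarz is $\sum_{Q_0\supseteq Q'\supseteq Q}\ell_{Q'}^{\,s-t}\lesssim|I|^{s-t}$, and combining this with $\mu(S(I_{Q'}))\le\|\mu\|_{\mathcal{CM}_s}\ell_{Q'}^{\,s}$ and the local form of your first input, $\sum_{Q'\subseteq Q_0}\ell_{Q'}^{\,t}\alpha_{Q'}^2\lesssim\|f\|_{\calQ_t}^2|I|^t$, bounds that part by $\|\mu\|_{\mathcal{CM}_s}\|f\|_{\calQ_t}^2|I|^s$. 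Such a localized dyadic argument is more elementary than the paper's proof, which instead represents $\frac{f(z)-f(a_I)}{(1-\overline{a_I}z)^s}+f(a_I)$ by a Bergman reproducing formula (Lemma \ref{20240614lem01}) and absorbs the kernels through the integral estimate of Lemma \ref{20240614lem02}; but the proposal as written, resting on the false uniform $s$-Carleson claim for the measures $\lambda_I$, has a genuine gap.
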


More generally, we have the following near-endpoints result on the $F(p, p-2, s)$ Carleson embedding. 

\begin{thm} \label{20240624thm01}
For any $p>1$, $0<t<s \le 1$ and $\mu$ being a positive Borel measure on $\D$, $id: F(p, p-2, t) \mapsto \calT_{s, p}^p(\mu)$ is bounded if and only if $\mu$ is an $s$-Carleson measure. 
\end{thm}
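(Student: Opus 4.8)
The proof follows the same scheme as Theorem~\ref{20240613thm02}, which is exactly the case $p=2$ (recall $F(2,0,t)=\calQ_t$, and $\calT^p_{s,p}$ becomes $\calT^2_{s,2}$ when $p=2$); since that argument is not Hilbertian in any essential way, the task is mainly to replace the $L^2$-estimates by $L^p$-estimates. Consider first \emph{necessity}. Assume $id\colon F(p,p-2,t)\to\calT^p_{s,p}(\mu)$ is bounded. Test against the usual normalized family
\[
  f_a(z)=\left(\frac{1-|a|^2}{1-\bar a z}\right)^{\gamma},\qquad a\in\D,
\]
with $\gamma=\gamma(p,t)>0$ chosen so that $\sup_{a\in\D}\|f_a\|_{F(p,p-2,t)}<\infty$ — a routine computation from the standard two-pole integral asymptotics for $\int_\D\frac{(1-|z|^2)^c\,dA(z)}{|1-\bar a z|^{d_1}|1-\bar w z|^{d_2}}$, applied to $\int_\D|f_a'|^p(1-|z|^2)^{p-2}(1-|\sigma_w(z)|^2)^t\,dA$ and then supremized in $w\in\D$. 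Bounding the $\calT^p_{s,p}(\mu)$-norm of $f_a$ from below over the Carleson box $S(I_a)$ attached to $a$, on which $f_a$ has size $\asymp 1$ (using, if necessary, a slight variant or a small family of such functions so that the relevant tent/oscillation average of $\mu|_{S(I_a)}$ is seen), one reads off $\mu(S(I_a))\lesssim\|id\|^p|I_a|^s$; since every Carleson box is comparable to some $S(I_a)$, $\mu$ is an $s$-Carleson measure. Note that the $f_a$ lie in $F(p,p-2,t)$ uniformly however small $t$ is, so shrinking the domain does not weaken this conclusion.

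For \emph{sufficiency} — the substantive direction, and the only place the gap $t<s$ is used — assume $\mu$ is $s$-Carleson. The key reformulation is that $f\in F(p,p-2,t)$ if and only if $d\nu_f(z):=|f'(z)|^p(1-|z|^2)^{p-2+t}\,dA(z)$ is a $t$-Carleson measure, with $\|\nu_f\|_{t\text{-Carleson}}\asymp\|f\|^p_{F(p,p-2,t)}$. Fix an arc $I\subseteq\partial\D$; after unwinding the definition of the tent space and a routine reduction, it suffices to bound the contribution of the tent over $I$ by $C|I|^s\|f\|^p_{F(p,p-2,t)}$, i.e.\ an estimate of the shape $\int_{S(I)}|f(z)-f(z_I)|^p\,d\mu(z)\lesssim|I|^s\|f\|^p_{F(p,p-2,t)}$, $z_I$ the vertex of the tent. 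Decompose the tent over $I$ into Whitney squares $\{Q\}$. Subharmonicity of $|f'|^p$ together with the definition of $\nu_f$ gives, for $z,z'$ in a common square $Q$,
\[
  |f(z)-f(z')|\lesssim\ell(Q)^{-t/p}\,\nu_f(\widehat Q)^{1/p},
\]
$\widehat Q$ a fixed dilate of $Q$ and $\ell(Q)\asymp 1-|z|$ on $Q$. Expressing $f$ at the center of a square $Q$ of generation $j$ as $f(z_I)$ plus the telescoping sum of these oscillations along the chain of dyadic ancestors of $Q$, applying Hölder along the chain with an auxiliary exponent $\varepsilon$, and then interchanging the order of summation — on the $\mu$-side the descendants of an ancestor $R$ satisfy $\sum_{Q'\text{ below }R}\mu(cQ')\lesssim\mu(S(cR))\lesssim\ell(R)^s$, on the $\nu_f$-side $\sum_{R:\ \ell(R)=2^{-k}|I|}\nu_f(\widehat R)\lesssim\nu_f(S(cI))\lesssim|I|^t\|f\|^p_{F(p,p-2,t)}$ — one is reduced to a geometric series in the generation $k$ that converges precisely when $t/p<\varepsilon<s/p$, a window that is nonempty exactly because $t<s$, and whose sum collapses the residual powers of $|I|$ to exactly $|I|^s$. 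The intra-square term $\sum_Q(\operatorname{osc}_Q f)^p\mu(cQ)$ is handled the same way (again a geometric series of ratio $2^{-(s-t)}<1$), and the slowly varying part of $f$ discarded at the outset is absorbed via $\mu(S(I))\lesssim|I|^s$ and the normalization of $\calT^p_{s,p}(\mu)$; here too the slack $s-t>0$ is what removes a would-be logarithmic factor. Taking the supremum over $I$ completes the proof.

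The one genuinely hard point is this combined two-weight box estimate in the sufficiency step: one must track the exact power of $|I|$ produced by the interaction, at a common dyadic scale, of the $t$-Carleson measure $\nu_f$ attached to $f$ and the $s$-Carleson measure $\mu$, and obtain precisely $|I|^s$ with no logarithmic loss. The strict inequality $t<s$ enters in exactly one way — it keeps the interval $(t/p,s/p)$ for the auxiliary Hölder exponent nonempty, hence makes every relevant geometric series converge — and when $t=s$ that window closes and the method (and, presumably, any comparably elementary method) fails, consistent with Conjecture~\ref{20240614conj01} being still open. Compared with Theorem~\ref{20240613thm02} the only $p$-dependent difference is bookkeeping: where $p=2$ allows Green-formula/Littlewood--Paley identities one instead uses subharmonicity of $|f'|^p$, Hölder along the chain, and the Whitney decomposition throughout. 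Finally, running the same estimates with the corresponding vanishing (``little-oh'') hypotheses yields the compactness statements advertised in the introduction, though those are not needed for the theorem above.
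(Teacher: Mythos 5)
Your necessity argument, as written, has a genuine gap. The tent space $\calT_{s,p}^p(\mu)$ in this theorem carries the weight $|I|^s\left(\log\frac{2}{|I|}\right)^p$ in the denominator (see \eqref{20240614cond01}). Your test functions $f_a(z)=\left(\frac{1-|a|^2}{1-\overline{a}z}\right)^{\gamma}$ are uniformly bounded, of size $\simeq 1$ on $S(I_a)$, so testing the embedding against them only yields $\mu(S(I))\lesssim \|id\|^p\,|I|^s\left(\log\frac{2}{|I|}\right)^p$, which is logarithmically weaker than the $s$-Carleson property you must conclude; the ``slight variant or small family'' hedge does not repair this, since no family of functions of size $O(1)$ on $S(I)$ can detect the logarithmic factor. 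The fix is exactly the paper's choice of test function: $f_{a_I}(z)=\log\frac{2}{1-\overline{a_I}z}$ is uniformly in $F(p,p-2,t)$ and has modulus $\simeq\log\frac{2}{|I|}$ on $S(I)$, so the logarithm in the test function cancels the logarithm in the tent-space norm and one reads off $\mu(S(I))\lesssim|I|^s$ directly. A smaller slip in the same spirit: the constant term $|f(z_I)|^p\mu(S(I))$ in your sufficiency step is absorbed by the $\left(\log\frac{2}{|I|}\right)^p$ in the definition of $\calT_{s,p}^p(\mu)$, not by the slack $s-t$.

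Your sufficiency argument, on the other hand, appears sound and is a genuinely different route from the paper's. The paper (which proves the $p=2$ case in detail and treats general $p$ as the stated ``interchange $2$ and $p$'' modification) never chains over a Whitney decomposition: it applies a Bergman reproducing formula to the twisted function $\frac{f(z)-f(a_I)}{(1-\overline{a_I}z)^s}+f(a_I)$ (Lemma \ref{20240614lem01}), borrows a tiny power $\gamma$ determined by $s-t$ inside a H\"older estimate, and then uses the sharp two-pole integral estimate of Lemma \ref{20240614lem02} together with Fubini and the standard kernel estimate for $s$-Carleson measures, so that the $\calQ_t$ (resp.\ $F(p,p-2,t)$) seminorm appears through the factor $(1-|\varphi_{a_I}(w)|^2)^{t}$. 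You instead use the Carleson-measure characterization of $F(p,p-2,t)$ via $\nu_f$, dyadic chaining along ancestors, an auxiliary H\"older exponent, and an interchange of summation; your oscillation estimate $|f(z)-f(z')|\lesssim\ell(Q)^{-t/p}\nu_f(\widehat Q)^{1/p}$, the bound $\sum_{R\,\mathrm{gen}\,k}\nu_f(\widehat R)\lesssim|I|^t\|f\|^p_{F(p,p-2,t)}$, and the resulting geometric series of ratio $2^{-(s-t)}$ all check out, and in fact give the oscillation bound without any logarithmic loss. Both methods spend the gap $t<s$ on a convergence condition (your series window versus the exponent constraints in Lemma \ref{20240614lem02}); your approach is more elementary and real-variable in flavor, while the paper's integral-representation approach is what extends directly to the $\alpha>1$ setting of Theorem \ref{20240624thm10}. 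With the necessity direction corrected as above, your proof would be acceptable.
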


Here, for any $p>0, \alpha, s>0$, $\beta>-1$ and $\mu$ being a positive Borel measure on $\D$, 
\begin{enumerate}
\item [$\bullet$] $\calB_\alpha$ refers to the \emph{$\alpha$-Bloch space} which consists of all the holomorphic functions on $\D$ such that 
$$
\left\|f \right\|_{\calB_\alpha}:=|f(0)|+\sup\limits_{z \in \D} |f'(z)|(1-|z|^2)^\alpha<+\infty.
$$
In particular, when $\alpha=1$, $\calB_1=\calB$ is the classical \emph{Bloch space}.
\item [$\bullet$] $\calD_\beta^p$ is the \emph{weighted Dirichlet space} which consists of all holomorhic functions $f$ on $\D$ such that
$$
\left\|f \right\|_{\calD_\beta^p}:=|f(0)|+\left( \int_{\D} |f'(z)|^p (1-|z|^2)^\beta dA(z) \right)^{\frac{1}{p}},
$$
where $dA$ is the standard Lebesgue measure on $\D$. 
\item [$\bullet$] $\calQ_s$ is the collection of all holomorphic functions on $\D$ such that
$$
\left\|f \right\|_{\calQ_s}:=|f(0)|+\sup_{a\in\D} \sqrt{ \int_{\D} |f'(z)|^2(1-|\varphi_{a}(z)|^2)^s dA(z)}<+\infty,
$$
where $\varphi_{a}(z):=\frac{a-z}{1-\overline{a}z}$ is the disc automorphism with base $a \in \D$. It is well-known that $\calQ_1=\textnormal{BMOA}$ and $\calQ_s=\calB$ when $s>1$ (see, e.g. \cite{Xiaojie2001}).

\medskip 

\item [$\bullet$] $F(p, p\alpha-2, s)$ is defined as the space of all holomorphic functions $f$ on $\D$ with satisfying 
\begin{eqnarray*}
&&\left\|f \right\|_{F(p, p\alpha-2, s)}\\
&& \qquad :=|f(0)| +\sup_{a \in \D} \left( \int_{\D} |f'(z)|^p \left(1-|z|^2 \right)^{p\alpha-2} (1-|\varphi_a(z)|^2)^s dA(z) \right)^{\frac{1}{p}}<+\infty. 
\end{eqnarray*}
Observe that $\calQ_s=F(2, 0, s)$. Moreover, it is also known that $F(p, p\alpha-2, s)=\calB^\alpha$ when $s>1$ (see, e.g., \cite[Theorem 1.3]{Zhao1996}). 

\medskip 

\item [$\bullet$] $\calT_{s, p}^p(\mu)$ consists of all holomorphic functions on $\D$ such that 
\begin{equation} \label{20240614cond01}
\left\|f \right\|_{\calT_{s, p}^p}:=\sup_{I \subset \partial \D} \left(\frac{1}{|I|^s \left(\log{\frac{2}{|I|}}\right)^p}\int_{S(I)}|f(z)|^pd\mu(z) \right)^{\frac{1}{p}}<+\infty,
\end{equation} 
where $S(I):=\left\{z\in\D:1-|I|<|z|<1, \ \frac{z}{|z|} \in I\right\}$ is the \emph{Carleson tent} associated to the subarc $I \subseteq \partial \D$. 

\medskip 

\item [$\bullet$] finally, we say $\mu$ is an \emph{$s$-Carleson measure} if 
\begin{equation} \label{20240613eq01}
\left\|\mu \right\|_{{\mathcal{CM}_s}}:=\sup_{I \subset \partial \D} \frac{\mu(S(I))}{|I|^s}<+\infty. 
\end{equation} 
\end{enumerate}

\begin{rem} \label{20240624rem01}
If $0<p \le 1$, Theorem \ref{20240624thm01} indeed holds at the endpoint $t=s$. More precisely, we can show that \emph{ for any $0<p, s \le 1$ with\footnote{Otherwise, the $F(p, p-2, s)$ space contains only constant functions (see, \cite[Proposition 2.12]{Zhao1996}.} $p+s>1$ and $\mu$ being a positive Borel measure on $\D$, $id: F(p, p-2, s) \mapsto \calT_{s, p}^p(\mu)$ is bounded if and only if $\mu$ is an $s$-Carleson measure}. The proof of this assertion follows from an easy modification of the argument in \cite[Lemma 3.2 and Theorem 3.1]{PauZhao2014}, and hence we omit it here. The key observation is to note that if $0<p \le 1$ and $\mu$ is an $s$-Carleson measure, then the embedding $id: \calD_{s+p-2}^2 \mapsto L^p(\mu)$ is bounded (see, the proof of \cite[Lemma 3.2]{PauZhao2014}). Therefore, we only focus on the case when $p>1$ in Theorem \ref{20240624thm01}.
\end{rem}

\medskip 

In the second part of this paper, we consider the more general $F(p, p\alpha-2, s)$ Carleson embedding problems. Our goal is to understand the following open question due to Pau and Zhao \cite{PauZhao2014}. 

\begin{opques}[\cite{PauZhao2014}] \label{20240624ques01}
What is the criterion for the boundedness and compactness of the embedding $id: F(p, p\alpha-2, s) \mapsto \frakT_{s, p}^{\infty}(\mu)$?
\end{opques}
Here, $\frakT_{s, p}^{\infty}(\mu)$ is the tent space which consists of all holomorphic functions $f$ on $\D$ such that 
\begin{equation} \label{20240616eq41}
\sup\limits_{I \subseteq \partial \D} \frac{1}{|I|^s} \int_{S(I)} |f(z)|^p d\mu(z)<+\infty.
\end{equation} 

Our second main theorem is the following, which answers Question \ref{20240624ques01} near endpoints. 

\begin{thm} \label{20240624thm10}
Let $\alpha>1, s>t>0$ and $p>1$. Then $id: F(p, p\alpha-2, t) \to \frakT_{s, p}^{\infty}(\mu)$ is bounded if and only if $\mu$ is an $\left[s+p(\alpha-1)\right]$-Carleson measure. 
\end{thm}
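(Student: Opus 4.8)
The plan is to prove the two directions separately. The necessity comes from the standard Möbius test functions; the sufficiency, which is the substantial half, proceeds by decomposing the Carleson box $S(I)$ into a Whitney family of pseudohyperbolic discs, bounding $|f|$ on each disc by a base value plus a chain of local oscillations, and noting that these oscillations obey a ``$t$-Carleson'' packing estimate that beats the decay coming from $\mu$ precisely because $t<s$. For the necessity, suppose $id\colon F(p,p\alpha-2,t)\to\frakT^\infty_{s,p}(\mu)$ is bounded with norm $M$. Given an arc $I\subseteq\partial\D$, set $a_I=(1-|I|)\zeta_I$ with $\zeta_I$ the midpoint of $I$, and test against $f_{a_I}(z)=\bigl(\frac{1-|a_I|^2}{1-\overline{a_I}z}\bigr)^\gamma$ for a fixed large $\gamma=\gamma(p,\alpha,t)$. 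A routine computation with the Forelli--Rudin integral estimates gives $\|f_{a_I}\|_{F(p,p\alpha-2,t)}\asymp(1-|a_I|^2)^{\alpha-1}\asymp|I|^{\alpha-1}$, while $|f_{a_I}(z)|\asymp 1$ on $S(I)$ because $|1-\overline{a_I}z|\asymp 1-|a_I|^2\asymp|I|$ there. Feeding $f_{a_I}$ into the embedding inequality gives $|I|^{-s}\mu(S(I))\lesssim|I|^{-s}\int_{S(I)}|f_{a_I}|^p\,d\mu\le M^p\|f_{a_I}\|_F^p\lesssim M^p|I|^{p(\alpha-1)}$, hence $\mu(S(I))\lesssim M^p|I|^{s+p(\alpha-1)}$, so $\mu$ is an $[s+p(\alpha-1)]$-Carleson measure.

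For the sufficiency, assume $\mu$ is an $[s+p(\alpha-1)]$-Carleson measure and fix $f$ with $\|f\|_{F(p,p\alpha-2,t)}\le 1$; after a routine reduction we may take $|I|\le 1$ and must show $\int_{S(I)}|f|^p\,d\mu\lesssim|I|^s$. Fix $0<c<c''<1$; for a dyadic subarc $J$ of $I$ write $a_J=(1-|J|)\zeta_J$, $D_J=\{z:|\varphi_{a_J}(z)|<c\}$, $n(J)$ for its generation, and $\theta_J=\sup_{D_J}|f-f(a_J)|$. I would assemble three facts. (i) \emph{Geometry:} the upper radial half of $S(J)$ lies in $D_J$, so $S(J)\subseteq D_J\cup S(J')\cup S(J'')$ with $J',J''$ the halves of $J$; iterating over the dyadic tree $\mathcal T$ of subarcs of $I$ gives $S(I)\subseteq\bigcup_{J\in\mathcal T}D_J$, the family $\{D_J\}_{J\in\mathcal T}$ has uniformly bounded overlap, and $\bigcup_{J\in\mathcal T}D_J\subseteq S(\widehat I)$ for an arc $\widehat I$ with $|\widehat I|\asymp|I|$ (and likewise below any fixed $K\in\mathcal T$). (ii) \emph{Oscillation:} transplanting by $\varphi_{a_J}$ and using the sub-mean value property of $|(f\circ\varphi_{a_J})'|^p$ gives $\theta_J^p\lesssim|J|^{-p(\alpha-1)}\omega_J$, where $\omega_J:=\int_{D(a_J,c'')}|f'(z)|^p(1-|z|^2)^{p\alpha-2}(1-|\varphi_{a_J}(z)|^2)^t\,dA(z)\le\|f\|_F^p\le 1$. (iii) \emph{Telescoping:} since $a_{J^*}\in D_J$ whenever $J^*$ is a child of $J$, chaining oscillations along the ancestors of $J$ (with a weighted Hölder inequality when $p>1$) gives, for any fixed $\delta>0$, $\sup_{D_J}|f|^p\lesssim|f(a_I)|^p+\sum_{J^*\supseteq J}\theta_{J^*}^p\,2^{\delta n(J^*)}$, and moreover $|f(a_I)|^p\lesssim\|f\|_{\calB^\alpha}^p|I|^{p(1-\alpha)}\lesssim|I|^{p(1-\alpha)}$ via the continuous inclusion $F(p,p\alpha-2,t)\hookrightarrow\calB^\alpha$ — this is where $\alpha>1$ is used.

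Putting these together, $\int_{S(I)}|f|^p\,d\mu\lesssim\sum_{J\in\mathcal T}(\sup_{D_J}|f|^p)\,\mu(D_J)$ splits, by (iii), into $|f(a_I)|^p\sum_{J\in\mathcal T}\mu(D_J)$ plus $\sum_{J\in\mathcal T}\mu(D_J)\sum_{J^*\supseteq J}\theta_{J^*}^p2^{\delta n(J^*)}$. The first term is $\lesssim|I|^{p(1-\alpha)}\mu(S(\widehat I))\lesssim|I|^{p(1-\alpha)}|I|^{s+p(\alpha-1)}=|I|^s$, using bounded overlap and the Carleson bound. In the second term, interchanging the sums gives $\sum_{n\ge0}2^{\delta n}\sum_{K\in\mathcal T,\,n(K)=n}\theta_K^p\bigl(\sum_{J\in\mathcal T,\,J\subseteq K}\mu(D_J)\bigr)$, where $\sum_{J\subseteq K}\mu(D_J)\lesssim\mu(S(\widehat K))\lesssim(2^{-n}|I|)^{s+p(\alpha-1)}$; and the key packing step is that the anchor-switching relation $1-|\varphi_{a_I}(z)|^2\asymp(1-|z|^2)/|I|\asymp 2^{-n}$ on $D(a_K,c'')$ for $n(K)=n$, together with bounded overlap, yields $\sum_{n(K)=n}\theta_K^p\lesssim(2^{-n}|I|)^{-p(\alpha-1)}\sum_{n(K)=n}\omega_K\lesssim(2^{-n}|I|)^{-p(\alpha-1)}2^{nt}\int_\D|f'|^p(1-|z|^2)^{p\alpha-2}(1-|\varphi_{a_I}(z)|^2)^t\,dA\le(2^{-n}|I|)^{-p(\alpha-1)}2^{nt}$. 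Hence the second term is $\lesssim|I|^s\sum_{n\ge0}2^{n(\delta-s+t)}$, which is finite once $\delta\in(0,s-t)$ — possible exactly because $t<s$. This gives $\int_{S(I)}|f|^p\,d\mu\lesssim|I|^s$ and finishes the proof.

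The step I expect to be the main obstacle is the packing estimate $\sum_{n(K)=n}\omega_K\lesssim 2^{nt}\|f\|_F^p$ in ingredients (ii)--(iii): this is the mechanism that converts membership in $F(p,p\alpha-2,t)$ into the geometric gain $2^{nt}$, which, set against the factor $2^{-n[s+p(\alpha-1)]}$ supplied by the Carleson condition on $\mu$, produces the summable net factor $2^{-n(s-t)}$ and makes the whole scheme work. The remaining points — the covering and bounded-overlap geometry of the $D_J$ (including the passage to a mildly dilated box $S(\widehat J)$), the Möbius change-of-variable computation behind the oscillation bound, and the reduction to $|I|\le 1$ — are technical rather than conceptual.
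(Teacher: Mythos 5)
Your proposal is correct, and the sufficiency half follows a genuinely different route from the paper. The necessity is the same in substance (Möbius-type test functions $f_{a_I}$ with $\|f_{a_I}\|_{F(p,p\alpha-2,t)}$ under control and $|f_{a_I}|$ of the right size on $S(I)$). For the sufficiency, the paper works with the auxiliary function $\frac{f(z)-f(a_I)}{(1-\overline{a_I}z)^{\frac{2s}{p}+2(\alpha-1)}}+f(a_I)$, represents its modulus by a Bergman-type reproducing integral, applies H\"older after borrowing a tiny exponent $\gamma=\frac{s-t}{p}$ from $|1-\overline{a_I}w|$, evaluates the resulting companion integral by the two-point estimate of Zhang--Li--Shang--Guo, and finishes with Fubini plus the standard kernel bound $\int_{S(I)}\frac{(1-|w|^2)^\sigma}{|1-\overline{w}z|^{2\sigma}}\,d\mu(z)\lesssim\|\mu\|_{\mathcal{CM}_\sigma}$; the gap $t<s$ enters as the borrowed power $\gamma$. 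You instead run a real-variable Whitney/dyadic scheme: cover $S(I)$ by pseudohyperbolic discs $D_J$ over the dyadic tree, bound the local oscillation by $\theta_J^p\lesssim|J|^{-p(\alpha-1)}\omega_J$ via subharmonicity, chain oscillations with a weighted H\"older inequality, and close with the packing estimate $\sum_{n(K)=n}\omega_K\lesssim 2^{nt}\|f\|_{F(p,p\alpha-2,t)}^p$ (anchor switching $1-|\varphi_{a_I}(z)|^2\simeq 2^{-n}$ plus bounded overlap), so that $t<s$ appears transparently as a convergent geometric factor $2^{-n(s-t-\delta)}$ against the Carleson decay $2^{-n[s+p(\alpha-1)]}$. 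I checked the exponents in your oscillation, packing, and summation steps and they balance exactly as you claim; the only points to write out carefully are uniform choices of the pseudohyperbolic radii (so that child anchors and upper half-boxes lie in $D_J$), the bounded-overlap constants across generations, and the routine case of arcs with $|I|\simeq 1$. What each approach buys: yours avoids the reproducing formula and the delicate two-index integral lemma entirely, is elementary and quite flexible, and exposes the mechanism behind the near-endpoint gain; the paper's argument is shorter once Lemma 3.1 of Zhang et al.\ is available, stays within one computational framework for all of Theorems \ref{20240613thm02}, \ref{20240624thm01} and \ref{20240624thm10}, and handles uniformly the $\alpha=1$ case with the logarithmic weight in $\calT_{s,p}^p(\mu)$, where your chaining would need an extra logarithmic bookkeeping.
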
 

\begin{rem} \label{20240626rem10}
(1) We first point out that in Theorem \ref{20240624thm10}, it suffices to consider the case when both $\alpha, p>1$. Indeed, 
\begin{enumerate}
\item [(a)] the $F(p, p\alpha-2, s)$ embedding problem has a simple solution when $0<\alpha<1$ (see, \cite[Theorem 4.2]{PauZhao2014}). This is due to the fact that $\calB^\alpha \subseteq H^\infty(\D)$ for $0<\alpha<1$. Here, $H^\infty(\D)$ is the disc algebra. 

\item [(b)] Theorem \ref{20240624thm01} can be interpreted as the endpoint version of Theorem \ref{20240624thm10} at $\alpha=1$. However, Theorem \ref{20240624thm10} does \emph{not} contain Theorem \ref{20240624thm01} since one has to introduce the logarithmic term $\left(\log \frac{2}{|I|} \right)^p$ in the definition of the tent spaces when $\alpha=1$ (see, \eqref{20240614cond01}). Therefore, in Theorem \ref{20240624thm10}, we only consider the case when $\alpha>1$.

\item [(c)]  similarly as in Remark \ref{20240624rem01}, when $0<p \le 1$ with satisfying $s+p\alpha>1$, Theorem \ref{20240624thm10} is true at the endpoint $t=s$ (see, \cite[Theorem 4.4]{PauZhao2014}). This is because again the embedding $id: \calD_{s+p\alpha-2}^p \mapsto L^p(\mu)$ is bounded when $0<p \le 1$. Hence, we will only focus on the case when $p>1$ in Theorem \ref{20240624thm10}.
\end{enumerate}
(2) The compactness results corresponding to Theorems \ref{20240613thm02}, \ref{20240624thm01} and \ref{20240624thm10} are routine once the boundedness results are achieved (see, Theorems \ref{20240621thm01} and \ref{20240626thm01}). 

\end{rem}

The $\calQ_s$ and $F(p, p\alpha-2, s)$ embedding problem is an important and fruitful subject in complex function theory. Here, we would like to include a brief historical remark on this line of research\footnote{We will focus on the case when both $p, \alpha>1$.}. 
\begin{enumerate}
\item  If $s \ge 1$, then Conjecture \ref{20240614conj01} is \emph{true} (see, \cite[Theorems 1,2]{LLZ2017}). The main reason that the argument in \cite{LLZ2017} works is due to the fact that when $s \ge 1$, $\mu$ is an $s$-Carleosn measure if and only if the embedding $\calD^2_s \hookrightarrow L^2(\mu)$ is continous  (see, \cite[Lemma 2.1]{Xiaojie2008}, and also \cite{Carleson1962, Hastings1975, Stegenga1980}). However, such an equivalence is \emph{not} available when $s \in (0, 1)$ (see, \cite{Arcozzi2002}), and hence the approach in \cite{LLZ2017} does \emph{not} generalize when $s \in (0, 1)$. Similarly, in the case of $F(p, p\alpha-2, s)$ Carleson embedding for $p>1$, Pau and Zhao \cite[Theorem 4.4]{PauZhao2014}  proved that Theorem \ref{20240624thm10} holds for $t=s$, however, under an additional assumption $s+p(\alpha-1) \ge 1$. This is again because, under such an assumption, the embedding $\calD_{s+p\alpha-2}^p \hookrightarrow L^p(\mu)$ is continuous. 

In general, this line of investigation is closely related to the study of continuous embedding properties of weighted Dirichlet spaces $\calD_\beta^p \hookrightarrow L^q(\mu)$, which is understood well when $p<q$, however, was only partially studied when $p \ge q$ (see, e.g., \cite{Girela2006} and the reference therein). 

\vspace{0.05in}

\item The $\calQ_s$ embedding problem has several beautiful solutions if one allows considering different Carleson measures and tent spaces. One remarkable result is due to Xiao in \cite{Xiaojie2008}, in which he showed that $id: \calQ_s \to \frakT_{s, 2}^\infty(\mu)$ is bounded if and only if $\mu$ is a \emph{logarithmic $s$-Carleson measure}, that is,
\begin{equation} \label{20240616eq40}
\left\|\mu \right\|_{\mathcal{LCM}_s}:=\sup_{I \subseteq \partial D} \left(\log \frac{2}{|I|} \right)^2 \frac{\mu(S(I))}{|I|^s}<+\infty.
\end{equation} 

We also refer the reader \cite[Theorem 3.1]{PauZhao2014} for the generalization of the above result to the $F(p, p-2, s)$-spaces. Here, we would like to point out that the appearance of the term $\left(\log \frac{2}{|I|} \right)^2$ in the definition of the logarithmic $s$-Carleson measure is pivotal, which consequently provides a subtle cancellation of the term $\frac{1}{|1-\overline{z} w|^2}$ and hence gives the continuous embedding $\calD_s^2 \hookrightarrow L^2(\mu)$ for $0<s<1$ in this case (see, e.g., \cite[Lemma 3.2]{PauZhao2014}). 

To this end, we remark that using the idea of Xiao, Pau, and Zhao, one could formulate a Carleson embedding for the $F(p, p\alpha-2, s)$ spaces for $\alpha>1$. Observe that in this case, one shall replace the test function $f_a(z)=\log \frac{1}{1-\overline{a}z}$ (when $\alpha=1$) by $f_a(z)=\frac{1}{\left(1-\overline{a}z \right)^{\alpha-1}}$ (when $\alpha>1$). Therefore, the tent space in Xiao-Pau-Zhao's analog no longer contains a logarithmic term. We will see that in this case, Xiao-Pau-Zhao's analog is indeed a consequence of Theorem \ref{20240624thm10} (see, Figure \ref{Fig1} and Statement \ref{20240627stat01}). 

\vspace{0.05in}

\item  Very recently, Lv \cite{LV2014} provided another way to consider the $\calQ_s$ embedding problems. The idea in her approach is to define both Carleson measures and tent spaces \emph{not} over the whole Carleson tent $S(I)$ but just over the \emph{upper-half Carleson tent}\footnote{In the original presentation in \cite{LV2014}, the Carleson measures and tent spaces are indeed defined over Bergman discs with fixed radius. This is equivalent to making the definition over all the upper-half Carleson tents $S_{\textnormal{up}}(I)$.}  $S_{\textnormal{up}}(I):=\left\{z \in \D: 1-|I|<|z|<1-\frac{|I|}{2}, \ \frac{z}{|z|} \in I \right\}$. 

The point is that in this situation, there is only one scale, that is $\frac{1}{|I|}$. This is different from those Carleson embedding problems concerning the full Carleson box $S(I)$, where one has to handle multiple scales of the form $\frac{2}{1-|z|^2}$ for $z \in S(I)$. Indeed, it is more reasonable\footnote{This is because, on one hand side, a Bloch Carleson embedding always implies a $\calQ_s$ Carleson embedding as $\calQ_s \subseteq \calB$; while on the other hand side, both $\calB$ and $\calQ_s$ share the same test function $f_a(z)=\log \frac{2}{1-\overline{a}z}$. Therefore, in \cite{LV2014}, a Bloch Carleson embedding is equivalent to a $\calQ_s$ Carleson embedding.} to state the results in \cite{LV2014} simply via Bloch Carleson embeddings, rather than $\calQ_s$ Carleson embeddings. 
\end{enumerate} 

\medskip 

Returning now to our near-endpoints Carleson embedding, we would like to emphasize that our method is \emph{different} from the previous approach which relies on a careful study of Carleson measures (or logarithmic Carleson measures) on weighted Dirichlet spaces. The novelty of this approach is two-fold: 
\begin{enumerate}
    \item [$\bullet$] First of all, we represent the derivative of the function $\frac{f(z)-f(a_I)}{(1-\overline{a_I}z)^s}+f(a_I)$ via an integral representation formula (see, Lemma \ref{20240614lem01}), rather than $f(z)$ itself\footnote{This idea originates from a handwritten note (private communication) of Jordi Pau back to 2018 (see, \cite{JordiPau2018}).}. Here, $a_I:=(1-|I|)e^{i\theta}$ is the center point of the roof of the Carleson tent $S(I)$ with $e^{i\theta}$ being the center of $I$. 
    
    \item [$\bullet$] Secondly, instead of proving a continuous embedding of the type $\calD_s^2 \hookrightarrow L^2(\mu)$, we borrow a tiny power of $\frac{1}{\left|1-\overline{a_I}w \right|}$ and apply a sophisticated integral formula from \cite{zhangLishangGuo2018} to absorb the term $\frac{1}{\left|1-\overline{z}w \right|^{2(1+\alpha-s)}}$, which gives us estimates at the scale $\frac{1}{|I|}$, rather than $\frac{1}{1-|z|}, \, z \in S(I)$ (see, \eqref{20240615eq01}). 
\end{enumerate} 

To this end, we would like to make a comparison between the near-endpoints Carleson embeddings with other known solutions to the $F(p, p\alpha-2, s)$ Carleson embedding problem. Here, we refer to the reader Section \ref{20240626sec01} for more detailed information on this part. 

Without loss of generality, let us only focus on the case when $p>2$ and $0<s<1$. Let us try to understand the following question: 

\begin{namedthm}{Question} \label{20240627ques01}
For each $\alpha>1$, what are all the possible $\widetilde{s}>0$, such that the following statement holds: $id: F(p, p\alpha-2, \widetilde{s}) \hookrightarrow \frakT_{s, p}^\infty(\mu)$ if and only if $\mu$ is an $\left[s+p(\alpha-1) \right]$-Carleson measure. 
\end{namedthm}

Recall that there are three different approaches to study the above question:
\begin{enumerate}
    \item Near-endpoints Carleson embeddings (see, Theorems \ref{20240613thm02}, \ref{20240624thm01}, and \ref{20240624thm10});
    \item Xiao-Pau-Zhao's logarithmic Carleson embeddings and its analog when $\alpha>1$ (see, \cite[Theorem 1.1]{Xiaojie2008},  \cite[Theorem 3.1]{PauZhao2014}, and Statement \ref{20240627stat01});
    \item Carleson embedding induced by the continuous embedding $\calD_{s+p\alpha-2}^p \hookrightarrow L^p(\mu)$ (see, \cite[Theorem 4.4]{PauZhao2014}). 
\end{enumerate}
We summarize the solutions for each of the above approaches to Question \ref{20240627ques01} in the following figure (see, Figure \ref{Fig1}).

\begin{center}
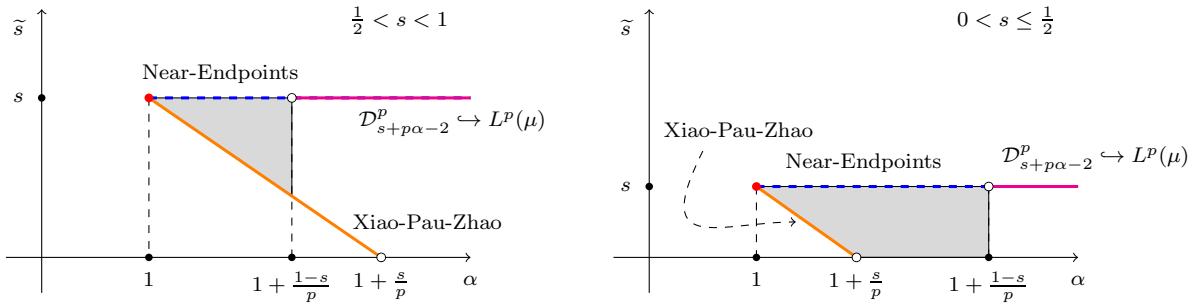
\begin{figure}[ht]
\begin{tikzpicture}[scale=4.7]
\draw (-1.7, -.1) [->] -- (-1.7,0.7);
\draw (-1.8, 0) [->] -- (-0.5,0);
\fill (-0.7, 0.6) node [above] {\tiny{$\frac{1}{2}<s<1$}};
\fill (1, 0.6) node [above] {\tiny{$0<s\le \frac{1}{2}$}};
\fill (-0.5, -0.02) node [below] {\tiny{$\alpha$}};
\fill (-1.72, 0.65) node [left] {\tiny{$\widetilde{s}$}}; 
\fill (-1.72, 0.45) node [left] {\tiny{$s$}};
\fill (-1.7, 0.45) circle [radius=.3pt]; 
\draw[-, fill=black!15, opacity=.5] (-1.4, 0.45)--(-1,0.45)--(-1, 0.173)--cycle;
\draw[-, fill=black!15, opacity=.5] (0.3, 0.2)--(0.95,0.2)--(0.95, 0)--(0.58, 0)--cycle;
\draw (0, -.1) [->] -- (0,0.7);
\draw (-0.1, 0) [->] -- (1.2,0);
\fill (1.2, -0.02) node [below] {\tiny{$\alpha$}};
\fill (-0.02, 0.65) node [left] {\tiny{$\widetilde{s}$}}; 
\fill (0, 0.2) circle [radius=.3pt];
\fill (-0.02, 0.2) node [left] {\tiny{$s$}}; 
\fill (-1.4, 0) circle [radius=.3pt];  
\fill (0.3, 0) circle [radius=.3pt]; 
\fill (-1.4,-0.01) node [below] {\tiny{$1$}};
\draw [dashed] (-1, 0)--(-1, 0.45);
\fill (-1,-0.01) node [below] {\tiny{$1+\frac{1-s}{p}$}};
\fill (-1, 0) circle [radius=.3pt]; 
\draw [opacity=1, line width=0.4mm, orange] (-1.4, 0.45)--(-0.75, 0);
\draw [opacity=1, dashed, line width=0.4mm, blue] (-1.4, 0.45)--(-0.5, 0.45);
\draw [dashed] (-1.4, 0.45)--(-1.4, 0); 
\draw [dashed] (0.3, 0.2)--(0.3, 0); 
\fill (0.3, -0.01) node [below] {\tiny{$1$}};
\fill (-0.75, -0.01) node [below] {\tiny{$1+\frac{s}{p}$}};
\draw[black] (-0.75, 0) circle (0.012); 
\draw [opacity=1, line width=0.4mm, orange] (0.3, 0.2)--(0.58, 0);
\fill [opacity=1, white] (-.75, 0) circle [radius=.3pt];
\draw [opacity=1, line width=0.4mm, magenta] (-1, 0.45)--(-0.5, 0.45);
\fill (0.58, -0.01) node [below] {\tiny{$1+\frac{s}{p}$}};
\draw[black] (0.58, 0) circle (0.012); 
\fill [opacity=1, white] (0.58, 0) circle [radius=.3pt];
\draw [opacity=1, dashed, line width=0.4mm, blue] (0.3, 0.2)--(1.2, 0.2);
\fill  [opacity=1, red] (0.3, 0.2) circle [radius=.35pt];
\fill  [opacity=1, red] (-1.4, 0.45) circle [radius=.35pt]; 
\draw[black] (-1, 0.45) circle (0.012); 
\fill [opacity=1, white] (-1, 0.45) circle [radius=.3pt];
\fill (-1.2, 0.46) node [above] {\tiny{Near-Endpoints}};
\fill (-0.86, 0.1) node [right] {\tiny{Xiao-Pau-Zhao}};
\fill (-0.55, 0.45) node [below] {\tiny{$\calD_{s+p\alpha-2}^p \hookrightarrow L^p(\mu)$}};
\fill (0.95, 0) circle [radius=.3pt];
\fill (0.95, -0.01) node [below] {\tiny{$1+\frac{1-s}{p}$}}; 
\draw [opacity=1, line width=0.4mm, magenta] (0.95, 0.2)--(1.2, 0.2);
\draw [dashed] (0.95, 0)--(0.95, 0.2); 
\draw[black] (0.95, 0.2) circle (0.012); 
\fill [opacity=1, white] (0.95, 0.2) circle [radius=.3pt];
\fill (0.6, 0.21) node [above] {\tiny{Near-Endpoints}};
\fill (1.25, 0.21) node [above] {\tiny{$\calD_{s+p\alpha-2}^p \hookrightarrow L^p(\mu)$}};
\draw [->, dashed] (0.15,0.3) ..controls (0,0) and (0.2, 0.1) .. (0.41,0.1);
\fill (0.25, 0.31) node [above] {\tiny{Xiao-Pau-Zhao}};
\end{tikzpicture}
\caption{\small{Comparison between near-endpoints Carleson embeddings with other solutions to the $F(p, p\alpha-2, s)$ Carleson embedding problem: Near-endpoints Carleson embeddings (the blue dashed line), Xiao-Pau-Zhao's logarithmic Carleson embeddings at $\alpha=1$ and its analog when $\alpha>1$ (red dot at $(1, s)$ and the orange line, respectively), Carleson embeddings induced by the continuous embedding $\calD_{s+p\alpha-2}^p \hookrightarrow L^p(\mu)$ (the magenta line), and new $(\alpha, \widetilde{s})$-ranges given by the near-endpoints Carleson embeddings (shadowed part).}}
\label{Fig1}
\end{figure}
\end{center}

\vspace{-0.2in}

The rest of the paper is organized as follows. In Section 2, we prove Theorems \ref{20240613thm02} and \ref{20240624thm01}. We also consider their corresponding compactness results. Section 3 is devoted to proving Theorem \ref{20240624thm10}. Finally, in Section 4, we compare the near-endpoints Carleson embeddings with other known solutions to the $F(p, p\alpha-2, s)$ Carleson embedding problems. 

Throughout this paper, for $a ,b \in  \mathbb{R}$, $a\lesssim b$ means there exists a positive number $C$, which is independent of $a$ and $b$, such that $a\leq C\,b$. Moreover, if both $a \lesssim  b$ and $b\lesssim a$ hold, we say $a \simeq b$.
\\
{\bf Acknowledgement.} The authors thank Zengjian Lou for introducing them to this interesting question and for sharing a handwritten draft from Jordi Pau after he visited the University of Barcelona in 2018. The authors also thank Ruhan Zhao for reading the first version of this draft carefully, as well as for his suggestions to improve the presentation of this paper. 

\bigskip 
\section{Proof of Theorems \ref{20240613thm02} and \ref{20240624thm01}}

In this section, we only focus on Theorem \ref{20240613thm02}. The proof of Theorem \ref{20240624thm01} follows from a standard modification of the proof of Theorem \ref{20240613thm02} by interchanging the role of $2$ and $p$, and hence we would like to leave the details to the interested reader. As some applications of Theorems \ref{20240613thm02} and \ref{20240624thm01}, we also study their compactness counterparts. 

\subsection{Initial reductions} The necessary part is standard. Indeed, for each $I \subseteq \partial \D$ being any subarc, consider the test function 
$$
f_{a_I}(z)=\log \frac{2}{1-\overline{a_I}{z}}. 
$$
An easy calculation yields $\sup\limits_{a_I \in \D} \left\|f_{a_I} \right\|_{\calQ_t} \lesssim 1$. 

Since $id: \calQ_t \mapsto \calT_{s, 2}^2$ is bounded, and $|I| \simeq |1-\overline{a_I} z|, z \in S(I)$, we have 
$$
\frac{\mu(S(I))}{|I|^s} \simeq  \frac{1}{|I|^s\left(\log\frac{2}{|I|}\right)^2}\int_{S(I)}\left|\log\frac{2}{1-\overline{a_I}z}\right|^2d\mu(z)\lesssim \|f_{a_I}\|^2_{\calQ_t} \lesssim 1. 
$$
The desired claim follows by taking the supremum of all $I \subseteq \partial \D$. 

\medskip 

Now we turn to prove the sufficient part, which is harder. Let $\mu$ be an $s$-Carleson measure, and for each $I \subseteq \partial D$,  we have to estimate the term 
$$
M_{I}(f):=\frac{1}{|I|^s \left(\log{\frac{2}{|I|}}\right)^2}\int_{S(I)}|f(z)|^2d\mu(z).
$$
First of all, by triangle inequality, we split the above term into two parts:
\begin{eqnarray} \label{20240614eq02}
M_I(f)%
&\lesssim& M_I^1(f)+M_I^2(f) \nonumber \\
&:=& \frac{1}{|I|^s \left(\log{\frac{2}{|I|}}\right)^2}\int_{S(I)}|f(z)-f(a_I)|^2d\mu(z)+\frac{1}{|I|^s \left(\log{\frac{2}{|I|}}\right)^2}\int_{S(I)}|f(a_I)|^2d\mu(z),
\end{eqnarray}
where $a_I$ is defined as in the argument in the necessary part. The estimate of $M_I^2(f)$ is straightforward. Indeed, 
\begin{eqnarray}  \label{20240614eq03} 
M_I^2(f)%
&=& \frac{\mu(S(I))}{|I|^s \left(\log \frac{2}{|I|} \right)^2} \cdot |f(a_I)|^2 \lesssim \frac{\mu(S(I))}{|I|^s \left(\log \frac{2}{|I|} \right)^2} \cdot \left\|f \right\|_{\calB}^2 \cdot \left(\log \frac{2}{1-|a_I|} \right)^2 \nonumber \\
&\lesssim& \left\|\mu \right\|_{\mathcal{CM}_s} \left\|f \right\|_{\calB}^2 \lesssim \left\|\mu \right\|_{\mathcal{CM}_s} \left\|f \right\|_{\calQ_t}^2,
\end{eqnarray}
where in the first estimate above, we have used \cite[(1.7)]{Xiaojie2001}. 

\medskip 

Next, we estimate $M_I^1(f)$. Using again the fact that
$$
|I| \simeq 1-|a_I|^2 \simeq \left|1-\overline{a_I}z \right|, \quad z \in S(I), 
$$
we see that 
\begin{eqnarray} \label{20240614eq04} 
M_I^1(f)%
&\simeq&   \frac{(1-|a_I|^2)^s}{\left(\log{\frac{2}{|I|}}\right)^2}\int_{S(I)}\frac{|f(z)-f(a_I)|^2}{|1-\overline{a_I}z|^{2s}}d\mu(z) \nonumber \\
&=& \frac{(1-|a_I|^2)^s}{\left(\log{\frac{2}{|I|}}\right)^2}\int_{S(I)}
    \left| \frac{f(z)-f(a_I)}{(1-\overline{a_I}z)^s}+f(a_I)-f(a_I)\right|^2d\mu(z) \nonumber  \\
    &\lesssim& M_I^{1, 1}(f)+M_I^{1, 2}(f), 
\end{eqnarray}
where 
\begin{equation} \label{20240614eq20}
M_I^{1, 1}(f):=\frac{(1-|a_I|^2)^s}{\left(\log{\frac{2}{|I|}}\right)^2}\int_{S(I)}
    \left| \frac{f(z)-f(a_I)}{(1-\overline{a_I}z)^s}+f(a_I)\right|^2d\mu(z)
\end{equation}
and 
$$
M_I^{1, 2}(f):=\frac{(1-|a_I|^2)^s}{\left(\log{\frac{2}{|I|}}\right)^2}\int_{S(I)}
    \left|f(a_I)\right|^2d\mu(z). 
$$
The term $M_I^{1, 2}(f)$ is easy to control. Indeed, 
\begin{equation}  \label{20240614eq04} 
M_I^{1, 2}(f) \lesssim \frac{(1-|a_I|^2)^s}{\left(\log{\frac{2}{|I|}}\right)^2} \|f\|_{\calB}^2 \cdot \left( \log \frac{2}{1-|a_I|} \right)^2 \mu(\D) \lesssim \left\|\mu \right\|_{\mathcal{CM}_s} \left\|f \right\|_{\calB}^2 \lesssim \left\|\mu \right\|_{\mathcal{CM}_s} \left\|f \right\|_{\calQ_t}^2. 
\end{equation} 

To estimate $M_I^{1, 1}(f)$, we need the following lemma. 

\begin{lem} \label{20240614lem01}
For $\beta>1$ sufficiently large and $g \in \calB$ with $g(0)=0$, one has
$$
|g(z)| \lesssim \int_{\D} \frac{|g'(w)|(1-|w|^2)^{\beta}}{\left|1-\overline{w}z \right|^{1+\beta}} dA(w), 
$$
where the implicit constant above is independent of $g$.
\end{lem}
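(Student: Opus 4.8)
The plan is to derive the pointwise bound from a standard reproducing formula for Bloch functions. First I would recall the classical reproducing formula: for $\beta > 0$ large enough and $g$ holomorphic on $\D$, one has the identity
$$
g(z) = g(0) + c_\beta \int_{\D} \frac{g'(w)\,(1-|w|^2)^\beta}{(1-\overline{w}z)^{1+\beta}}\, \overline{w}\, dA(w),
$$
(or a close variant with $(1-\overline{w}z)^{2+\beta}$ in the denominator and an extra factor; the precise form can be found in Zhu's book on Bergman spaces). Since $g(0)=0$, the constant term drops out. Then I would simply take absolute values inside the integral and use $|\,\overline{w}\,| \le 1$, which immediately yields
$$
|g(z)| \lesssim \int_{\D} \frac{|g'(w)|\,(1-|w|^2)^\beta}{|1-\overline{w}z|^{1+\beta}}\, dA(w).
$$
So the whole statement reduces to justifying the reproducing formula and checking that it holds for Bloch functions (not just, say, Bergman-space functions).

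The one point requiring care is convergence of the integral: one needs $\beta$ large enough that $\int_{\D} \frac{|g'(w)|(1-|w|^2)^\beta}{|1-\overline{w}z|^{1+\beta}}\,dA(w) < \infty$ for $g \in \calB$. Since $g \in \calB$ gives $|g'(w)| \lesssim \|g\|_{\calB}(1-|w|^2)^{-1}$, the integrand is dominated by $\|g\|_{\calB}\,\frac{(1-|w|^2)^{\beta-1}}{|1-\overline{w}z|^{1+\beta}}$, and the standard Forelli–Rudin type estimate (e.g. \cite[Theorem 1.7]{zhu_book} or the integral estimates used elsewhere in this paper) shows this integral is finite — in fact $\lesssim (1-|z|^2)^{-1}$ — provided $\beta - 1 > -1$, i.e. $\beta > 0$, which is certainly covered by taking $\beta$ large. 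This is why the hypothesis "$\beta > 1$ sufficiently large" is stated; it guarantees both absolute convergence and the validity of differentiating under the integral sign when one verifies the formula.

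To verify the reproducing formula itself for $g \in \calB$ with $g(0) = 0$: I would first establish it for $g$ holomorphic in a neighborhood of $\overline{\D}$ (where everything converges absolutely and one can expand in Taylor series and integrate term by term, using the beta-integral $\int_{\D}(1-|w|^2)^\beta \overline{w}^n w^k\, dA(w)$ which is nonzero only when $n = k$), then pass to general $g \in \calB$ by applying the identity to the dilates $g_r(w) := g(rw)$ and letting $r \to 1^-$, using dominated convergence justified by the Bloch bound $|g_r'(w)| \le |g'(rw)| \lesssim \|g\|_{\calB}(1-|rw|^2)^{-1} \le \|g\|_{\calB}(1-|w|^2)^{-1}$ uniformly in $r$. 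The main (mild) obstacle is this last limiting argument — making sure the dominating function is integrable against the kernel, which again is exactly where "$\beta$ sufficiently large" is used. I expect no genuine difficulty here; this is a textbook-style fact, and the lemma's role in the paper is just to set up the integral representation of $\frac{f(z)-f(a_I)}{(1-\overline{a_I}z)^s} + f(a_I)$ used in estimating $M_I^{1,1}(f)$.
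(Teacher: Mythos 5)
There is a genuine gap: the exact identity your whole plan rests on is false as written. Test it on $g(w)=w$: with your kernel the right-hand side is
$$
c_\beta \int_{\D} \frac{(1-|w|^2)^\beta\,\overline{w}}{(1-\overline{w}z)^{1+\beta}}\,dA(w)
= c_\beta\sum_{k\ge 0} c_k z^k \int_{\D} \overline{w}^{\,k+1}(1-|w|^2)^\beta\,dA(w)=0,
$$
while the left-hand side is $z$. More generally, your own Taylor-series verification scheme shows why no ``close variant'' of this shape can work: pairing $g'(w)=\sum_n n a_n w^{n-1}$ against a kernel of the form $\overline{w}\,(1-|w|^2)^a(1-\overline{w}z)^{-b}$ picks out the coefficient with $k+1=n-1$ and thus produces $z^{n-2}$ from $w^n$ (degree shifted the wrong way), and even the variant with $w$ in place of $\overline{w}$ reproduces $a_n z^n$ only up to the $n$-dependent factors $\tfrac{n}{n+\beta+1}$, so it is not an identity either. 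Since the exact representation formula is the heart of the argument, this is not a detail one can defer to ``the precise form in Zhu's book''; the correct kernel is structurally different from what you wrote.

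The missing idea, which is the paper's route, is to apply the weighted Bergman reproducing formula not to $g$ but to its derivative: since $\beta>1$ gives $\calB=\calQ_\beta$ (or simply $|g'(w)|\lesssim \|g\|_{\calB}(1-|w|^2)^{-1}$), one has $g'\in A_\beta^2$, hence
$$
g'(\xi)=c_\beta\int_{\D}\frac{(1-|w|^2)^\beta g'(w)}{(1-\xi\overline{w})^{2+\beta}}\,dA(w),
$$
and integrating in $\xi$ over the segment $[0,z]$ (Fubini) yields the exact identity with kernel
$\frac{(1-|w|^2)^\beta}{(1+\beta)\,\overline{w}}\left[(1-z\overline{w})^{-1-\beta}-1\right]$, which has a removable singularity at $w=0$ and satisfies
$$
\left|\frac{1-(1-z\overline{w})^{1+\beta}}{(1+\beta)\,\overline{w}}\right|\lesssim 1 \quad (z,w\in\D),
$$
so taking absolute values gives exactly the stated bound. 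Note also that once the formula is applied to $g'\in A_\beta^2$, no dilation/limiting argument is needed; your convergence remarks via Forelli--Rudin are fine but secondary. As written, your proposal would stall at the unproved (indeed false) identity, so the key step needs to be replaced by the derivative-plus-path-integration argument above.
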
 

\begin{proof}
The above estimate is well-known (see, e.g., the proof of \cite[Lemma 3.2]{PauZhao2014}). Here, we would like to include proof for such a result, which is for the reader's convenience. To begin with, since $\beta>1$, we have $\calB=\calQ_\beta$, which in particular gives
$$
\int_{\D} \left|g'(z) \right|^2 \left(1-|z|^2 \right)^\beta dA(z)<+\infty, 
$$
and therefore $g' \in A_\beta^2$, where $A_\beta^2$ is the weighted Bergman space. Now using \cite[Proposition 4.23]{ZhuBook2007}, we have
$$
g'(\xi)=\int_{\D} \frac{(1-|w|^2)^\beta g'(w)}{\left(1-\xi \overline{w} \right)^{2+\beta}} dA(w). 
$$
Taking the integral along the line segment enclosed by $0$ and $z$ on both sides of the above equation and then using Fubini, we see that 
\begin{eqnarray*}
\left|g(z) \right|%
&=& \left| \int_0^z \left[\int_{\D} \frac{(1-|w|^2)^\beta g'(w)}{\left(1-\xi \overline{w} \right)^{2+\beta}} dA(w) \right] d \xi \right| \\
&=& \left| \int_\D (1-|w|^2)^\beta g'(w) \cdot \left[ \int_0^z \frac{1}{\left(1-\xi \overline{w} \right)^{2+\beta}} d\xi \right] dA(w) \right| \\
&\lesssim& \int_{\D} (1-|w|^2)^\beta |g'(w)| \cdot \left| \frac{1-(1-z \overline{w})^{1+\beta}}{(1+\beta)\overline{w}} \right| \cdot \frac{1}{\left|1-z\overline{w} \right|^{1+\beta}} dA(w).
\end{eqnarray*}
The desired estimate follows from the fact that 
$$
\sup_{z, w \in \D} \left| \frac{1-(1-z \overline{w})^{1+\beta}}{(1+\beta)\overline{w}} \right| \lesssim 1. 
$$
\end{proof}

Returning back to the term $M_I^{1, 1}(f)$, we first note that the function $\frac{f(z)-f(a_I)}{(1-\overline{a_I}z)^s}+f(a_I)$ satisfies the assumption of Lemma \ref{20240614lem01}. Therefore, we have
\begin{eqnarray} \label{20240614eq04} 
N(f, I, z)&:=& \left|\frac{f(z)-f(a_I)}{(1-\overline{a_I}z)^s}+f(a_I) \right|^2 \lesssim \left[ \int_{\D} \frac{\left| \frac{d}{dw} \left(\frac{f(w)-f(a_I)}{(1-\overline{a_I}w)^s} \right) \right|(1-|w|^2)^{\beta}}{\left|1-\overline{w}z \right|^{1+\beta}} dA(w)\right]^2 \nonumber \\
&\lesssim& N_1(f, I, z)+N_2(f, I, z),
\end{eqnarray}
where
$$
N_1(f, I, z):=\left[\int_{\D} \frac{|f'(w)|(1-|w|^2)^{\beta}}{\left|1-\overline{a_I}w \right|^s \left|1-\overline{w}z \right|^{1+\beta}}dA(w)\right]^2
$$
and
$$
N_2(f, I, z):=\left[\int_{\D} \frac{|f(w)-f(a_I)|(1-|w|^2)^{\beta}}{\left|1-\overline{a_I}w \right|^{s+1} \left|1-\overline{w}z \right|^{1+\beta}}dA(w)\right]^2.
$$
Note that here it is crucial that the implicit constants in the above estimates do \emph{not} depend on $I$. The estimates of the terms $N_1(f, I, z)$ and $N_2(f, I, z)$ are the crux of our analysis, which is achieved via the following important lemma. 

\begin{lem} [{\cite[Theorem 3.1]{zhangLishangGuo2018}}] \label{20240614lem02}
Suppose $\delta>-1$, $b, c \ge 0$, and $k \ge 0$ with $b+c-\del>2$, $b-\del<2$ and $c-\del<2$. Let
$$
J_{z, a}:=\int_\D \frac{(1-|w|^2)^\delta}{\left|1-\overline{z}w \right|^b \left|1-\overline{a}w \right|^c} \log^k \frac{e}{1-|w|^2} dA(w), \quad z, a \in \D.
$$
Then 
$$
J_{z, a} \simeq \frac{1}{\left|1-\overline{a}z \right|^{b+c-\del-2}} \log^k \frac{e}{\left|1- \overline{a}z \right|}. 
$$
\end{lem}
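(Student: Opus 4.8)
The statement is quoted verbatim from \cite[Theorem 3.1]{zhangLishangGuo2018}, so the reader may consult that reference; for completeness I outline the argument I would give, which follows the standard circle of ideas for Forelli--Rudin type integrals with two singular factors. The plan is to prove the upper and lower estimates separately, after first reducing to the range where $|1-\overline{a}z|$ is small and $|a|,|z|$ are close to $1$: if $|1-\overline{a}z|$ stays bounded below then $J_{z,a}$ is comparable to a positive constant, finite because $\delta>-1$ gives integrability of $(1-|w|^2)^\delta$ while $b-\delta<2$ and $c-\delta<2$ give local integrability near the boundary points $\overline{z}/|z|$ and $\overline{a}/|a|$ respectively. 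Throughout I would use the two elementary geometric identities
\[
|w-z|^2=|1-\overline{z}w|^2-(1-|w|^2)(1-|z|^2)\le |1-\overline{z}w|^2,\qquad 1-\overline{a}z=(1-\overline{a}w)+\overline{a}(w-z),
\]
which yield $|1-\overline{a}z|\le |1-\overline{a}w|+|1-\overline{z}w|$, and, after interchanging $a$ and $z$, also $|1-\overline{z}w|\le |1-\overline{a}w|+|z-a|\le |1-\overline{a}w|+|1-\overline{a}z|$ (and symmetrically).

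For the lower bound I would restrict the integral to the ``top half of the Carleson box'' $E=\{w\in\D:\ 1-|w|^2\simeq|1-\overline{a}w|\simeq|1-\overline{a}z|\}$, which has area $\simeq|1-\overline{a}z|^2$. On $E$ the inequalities above force $|1-\overline{z}w|\simeq|1-\overline{a}z|$ as well, so the integrand is $\simeq|1-\overline{a}z|^{\delta-b-c}\log^k\frac{e}{|1-\overline{a}z|}$ throughout $E$, and multiplying by $|E|$ gives the claimed lower bound.

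For the upper bound I would split $\D=D_a\cup D_z$ with $D_a=\{|1-\overline{a}w|\le|1-\overline{z}w|\}$; by the symmetry of the target under $a\leftrightarrow z$ it suffices to bound $\int_{D_a}$. Split further $D_a=D_a^1\cup D_a^2$ according to whether $|1-\overline{a}w|\le\frac12|1-\overline{a}z|$ or not. On $D_a^1$ the geometric inequalities give $|1-\overline{z}w|\simeq|1-\overline{a}z|$, so the integrand is $\simeq(1-|w|^2)^\delta|1-\overline{a}z|^{-b}|1-\overline{a}w|^{-c}\log^k\frac{e}{1-|w|^2}$; since $c-\delta<2$, the classical one--singularity estimate (see, e.g., \cite{ZhuBook2007}) gives $\int_{\{|1-\overline{a}w|\le\frac12|1-\overline{a}z|\}}(1-|w|^2)^\delta|1-\overline{a}w|^{-c}\log^k\frac{e}{1-|w|^2}\,dA(w)\simeq|1-\overline{a}z|^{\delta+2-c}\log^k\frac{e}{|1-\overline{a}z|}$, which is the required bound on $D_a^1$. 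On $D_a^2$ one has $|1-\overline{z}w|^{-b}\le|1-\overline{a}w|^{-b}$, so the integrand is $\le(1-|w|^2)^\delta|1-\overline{a}w|^{-(b+c)}\log^k\frac{e}{1-|w|^2}$; since $b+c-\delta>2$, a dyadic decomposition of $\{|1-\overline{a}w|>\frac12|1-\overline{a}z|\}$ into annuli $|1-\overline{a}w|\simeq2^j|1-\overline{a}z|$ (each contributing $\simeq(2^j|1-\overline{a}z|)^{\delta+2-b-c}\log^k\frac{e}{|1-\overline{a}z|}$) sums, via a convergent geometric series, to $\simeq|1-\overline{a}z|^{\delta+2-b-c}\log^k\frac{e}{|1-\overline{a}z|}$. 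Adding the four pieces (two for $D_a$, two symmetric ones for $D_z$, the latter using $b-\delta<2$) completes the estimate.

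The part I expect to require the most care is the bookkeeping of the logarithmic factor: one needs the one--variable facts $\int_0^R\rho^\delta\log^k\frac{e}{\rho}\,d\rho\simeq R^{\delta+1}\log^k\frac{e}{R}$ (valid since $\delta>-1$) and $\sum_{j\ge0}(2^jR)^{\delta+2-b-c}\log^k\frac{e}{2^jR}\simeq R^{\delta+2-b-c}\log^k\frac{e}{R}$ (valid since $b+c-\delta>2$), together with the observation that on each dyadic piece $\log\frac{e}{1-|w|^2}$ is controlled by $\log\frac{e}{|1-\overline{a}z|}$, so that the weight $\log^k\frac{e}{1-|w|^2}$ converts cleanly into $\log^k\frac{e}{|1-\overline{a}z|}$ without the loss of an extra logarithmic power that a careless splitting would produce. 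Once these one--variable estimates are in place, the two--singularity bound is the routine region decomposition described above.
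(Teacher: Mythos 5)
The paper itself offers no proof of this lemma: it is imported verbatim, with a citation, from \cite[Theorem 3.1]{zhangLishangGuo2018} and used as a black box, so there is no in-paper argument to compare yours against line by line. Your self-contained sketch is a correct Forelli--Rudin type proof and would serve as a substitute for the citation: the reduction to the regime where $|1-\overline{a}z|$ is small, the lower bound over the top-half Carleson box $E$ (where $1-|w|^2\simeq|1-\overline{a}w|\simeq|1-\overline{a}z|$ together with $|a-z|\le|1-\overline{a}z|$ forces $|1-\overline{z}w|\lesssim|1-\overline{a}z|$, which is the right direction since $b,c\ge0$), and the upper bound via the splitting $\D=D_a\cup D_z$ with the further split at $|1-\overline{a}w|\le\tfrac12|1-\overline{a}z|$ invoke the hypotheses $c-\delta<2$, $b-\delta<2$ and $b+c-\delta>2$ exactly where they are needed, and the symmetry of the right-hand side justifies treating only $D_a$. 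One caveat on wording: the claim that ``on each dyadic piece $\log\frac{e}{1-|w|^2}$ is controlled by $\log\frac{e}{|1-\overline{a}z|}$'' is false pointwise, since $1-|w|^2$ can be arbitrarily small on a shell $\{|1-\overline{a}w|\simeq 2^jR\}$; the control only holds after integrating in the radial variable, which is precisely what your one-variable estimate $\int_0^T\rho^\delta\log^k\frac{e}{\rho}\,d\rho\simeq T^{\delta+1}\log^k\frac{e}{T}$ (valid for $\delta>-1$ and $T\lesssim1$) provides, so this is a slip of phrasing rather than a gap. Note also that for the purposes of this paper only the cases $k=0$ and $k=1$ are used, so your outline covers more than is strictly required.
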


\subsection{Estimates of $N_1(f, I, z)$ and $N_2(f, I, z)$} 

We first estimate the term $N_1(f, I, z)$. Since $s>t$, denote
$$
\gamma:=\frac{s-t}{2}>0.
$$
By Cauchy-Schwarz, we have 
\begin{eqnarray} \label{20240615eq01}
&&N_1(f, I, z)= \left[ \int_{\D} \frac{|f'(w)|(1-|w|^2)^{s-\gamma}}{\left|1-\overline{a_I} w \right|^{s-2\gamma} \left|1- \overline{w} z \right|^s} \cdot \frac{(1-|w|^2)^{\beta-s+\gamma}}{\left|1-\overline{a_I} w\right|^{2 \gamma} \left|1- \overline{w} z \right|^{1+\beta-s}} dA(w)\right]^2 \nonumber \\
&&\quad \le \left( \int_{\D} \frac{|f'(w)|^2(1-|w|^2)^{2s-2\gamma} dA(w)}{\left|1-\overline{a_I} w \right|^{2(s-2\gamma)} \left|1- \overline{w} z \right|^{2s}} \right) \cdot \left( \int_\D \frac{(1-|w|^2)^{2(\beta-s+\gamma)}dA(w)}{\left|1-\overline{a_I} w\right|^{4 \gamma} \left|1- \overline{w} z \right|^{2(1+\beta-s)}} \right).
\end{eqnarray}
Now we estimate the second integral in \eqref{20240615eq01} by using Lemma \ref{20240614lem02} with
$$
\delta=2(\beta-s+\gamma), \ b=2(1+\beta-s), \ c=4\gamma, \quad \textnormal{and} \quad k=0. 
$$
Observe that one has
$$
b+c-\delta=2+2\gamma>2, \ b-\del=2-2\gamma<2, \quad \textrm{and} \quad c-\delta=2\gamma+2s-2\beta<2.
$$
Therefore, 
$$
\int_\D \frac{(1-|w|^2)^{2(\beta-s+\gamma)}dA(w)}{\left|1-\overline{a_I} w\right|^{4 \gamma} \left|1- \overline{w} z \right|^{2(1+\beta-s)}} \simeq \frac{1}{\left|1-\overline{a_I}z \right|^{2\gamma}}. 
$$
Plugging the above estimate back to \eqref{20240615eq01}, we see that
\begin{equation} \label{20240614eq10}
N_1(f, I, z) \lesssim \frac{1}{|I|^{2\gamma}} \cdot \int_{\D} \frac{|f'(w)|^2(1-|w|^2)^{2s-2\gamma} }{\left|1-\overline{a_I} w \right|^{2(s-2\gamma)} \left|1- \overline{w} z \right|^{2s}} dA(w).
\end{equation}

\medskip 

Next, we estimate $N_2(f, I, z)$. Let us first split $N_2(f, I, z)$ into two parts:
\begin{equation} \label{20240614eq11}
N_2(f, I, z) \lesssim N_{2, 1}(f, I, z)+N_{2, 2}(f, I, z),
\end{equation} 
where 
$$
N_{2, 1}(f, I, z):=\left[\int_{\D} \frac{|f(w)-f(0)|(1-|w|^2)^{\beta}}{\left|1-\overline{a_I}w \right|^{s+1} \left|1-\overline{w}z \right|^{1+\beta}}dA(w)\right]^2, 
$$
and
$$
N_{2, 2}(f, I, z):=\left[\int_{\D} \frac{|f(a_I)-f(0)|(1-|w|^2)^{\beta}}{\left|1-\overline{a_I}w \right|^{s+1} \left|1-\overline{w}z \right|^{1+\beta}}dA(w)\right]^2.
$$
We estimate $N_{2, 2}(f, I, z)$ first. Clearly, $|f(a_I)-f(0)| \lesssim \left\|f \right\|_{\calB} \log \frac{2}{1-|a_I|}$, and hence
$$
N_{2, 2}(f, I, z) \lesssim \left\|f \right\|^2_{\calB}  \cdot \log^2 \frac{2}{1-|a_I|} \left[\int_{\D} \frac{(1-|w|^2)^{\beta}}{\left|1-\overline{a_I}w \right|^{s+1} \left|1-\overline{w}z \right|^{1+\beta}}dA(w)\right]^2.
$$
Letting now 
$$
\delta=\beta, \ b=\beta+1, \ c=s+1 \quad \textnormal{and} \quad k=0
$$
in Lemma \ref{20240614lem02} and checking that
$$
b+c-\del=s+2>2, \ b-\delta=1<2, \quad \textrm{and} \quad c-\delta=s+1-\beta<2,
$$
we conclude that 
\begin{eqnarray} \label{20240614eq12}
N_{2, 2}(f, I, z)%
&\lesssim& \left\|f \right\|^2_{\calB}  \cdot \log^2 \frac{2}{1-|a_I|} \cdot \frac{1}{\left|1-\overline{a_I}z \right|^{2s}} \nonumber \\ 
&\lesssim&  \frac{\left\|f \right\|_{\calB}^2}{|I|^{2s}} \left(\log \frac{e}{|I|} \right)^2 \lesssim  \frac{\left\|f \right\|_{\calQ_t}^2}{|I|^{2s}} \left(\log \frac{e}{|I|} \right)^2, 
\end{eqnarray} 
where in the second estimate, we have used that $z \in S(I)$ again. 

\medskip 

Finally, we estimate $N_{2, 1}(f, I, z)$. Using again the fact that $|f(w)-f(0)| \lesssim \|f\|_{\calB} \log \frac{2}{1-|w|}$,
we have 
\begin{equation} \label{20240626eq20}
N_{2, 1}(f, I, z) \lesssim \left\|f \right\|_{\calB}^2 \cdot \left[\int_{\D} \frac{\log \frac{2}{1-|w|} \cdot (1-|w|^2)^{\beta}}{\left|1-\overline{a_I}w \right|^{s+1} \left|1-\overline{w} z \right|^{1+\beta}} dA(w)\right]^2.
\end{equation}
Now in Lemma \ref{20240614lem02}, we let 
$$
\delta=\beta, \ b=\beta+1, \ c=s+1, \quad \textrm{and} \quad k=1, 
$$
and this gives
\begin{eqnarray} \label{20240614eq13}
N_{2, 1}(f, I, z)%
&\lesssim& \left\|f \right\|_{\calB}^2 \cdot \left[\frac{1}{\left|1-\overline{a_I}z \right|^{s}} \log \frac{e}{\left|1-\overline{a_I}z \right|} \right]^2 \nonumber \\
&\lesssim&  \frac{\left\|f \right\|_{\calB}^2}{|I|^{2s}} \left(\log \frac{e}{|I|} \right)^2 \lesssim  \frac{\left\|f \right\|_{\calQ_t}^2}{|I|^{2s}} \left(\log \frac{e}{|I|} \right)^2.
\end{eqnarray}

\medskip 

Combining now \eqref{20240614eq04}, \eqref{20240614eq10}, \eqref{20240614eq11}, \eqref{20240614eq12}, and \eqref{20240614eq13}, we conclude that
$$
N(f, I, z) \lesssim \frac{\left\|f \right\|_{\calB}^2}{|I|^{2s}} \left(\log \frac{e}{|I|}\right)^2+ \frac{1}{|I|^{2\gamma}} \cdot \int_{\D} \frac{|f'(w)|^2(1-|w|^2)^{2s-2\gamma} }{\left|1-\overline{a_I} w \right|^{2(s-2\gamma)} \left|1- \overline{w} z \right|^{2s}} dA(w).
$$
Plugging the above estimate back to \eqref{20240614eq20}, and using the fact that $\log \frac{2}{|I|} \gtrsim 1$, we have
\begin{eqnarray*} 
&& M_I^{1,1}(f)=\frac{(1-|a_I|^2)^s}{\left(\log{\frac{2}{|I|}}\right)^2}\int_{S(I)}
    N(f, I, z) d\mu(z) \nonumber \\
&& \quad \lesssim \left\|f \right\|_{\calB}^2 \cdot \frac{\mu(S(I))}{|I|^s}+  (1-|a_I|^2)^{s-2\gamma}\int_{S(I)}
    \left[\int_{\D} \frac{|f'(w)|^2(1-|w|^2)^{2s-2\gamma} }{\left|1-\overline{a_I} w \right|^{2(s-2\gamma)} \left|1- \overline{w} z \right|^{2s}} dA(w) \right] d\mu(z).
\end{eqnarray*} 
By Fubini, we observe that
\begin{eqnarray*}
&& (1-|a_I|^2)^{s-2\gamma}\int_{S(I)}
    \left[\int_{\D} \frac{|f'(w)|^2(1-|w|^2)^{2s-2\gamma} }{\left|1-\overline{a_I} w \right|^{2(s-2\gamma)} \left|1- \overline{w} z \right|^{2s}} dA(w) \right] d\mu(z) \\
    &&\quad = \int_{\D} \left|f'(w) \right|^2 \frac{(1-|w|^2)^{s-2\gamma}(1-|a_I|^2)^{s-2\gamma}}{\left|1-\overline{a_I}w \right|^{2(s-2\gamma)}} \left( \int_{S(I)} \frac{(1-|w|^2)^s}{\left|1-\overline{w} z\right|^{2s}} d\mu(z) \right) dA(w) \\
    && \quad= \int_{\D} |f'(w)|^2 (1-|\varphi_{a_I}(w)|^2)^{s-2\gamma}  \left( \int_{S(I)} \frac{(1-|w|^2)^s}{\left|1-\overline{w} z\right|^{2s}} d\mu(z) \right) dA(w).
\end{eqnarray*}
Since $\mu$ is $s$-Carleson, it is well-known that
$$
\int_{S(I)} \frac{(1-|w|^2)^s}{\left|1-\overline{w} z\right|^{2s}} d\mu(z) \lesssim \left\|\mu \right\|_{\mathcal{CM}_s}. 
$$
(see, e.g., \cite[Theorem 45]{ZhaoZhu2008}). 
Therefore, 
$$
(1-|a_I|^2)^{s-2\gamma}\int_{S(I)}
    \left[\int_{\D} \frac{|f'(w)|^2(1-|w|^2)^{2s-2\gamma} }{\left|1-\overline{a_I} w \right|^{2(s-2\gamma)} \left|1- \overline{w} z \right|^{2s}} dA(w) \right] d\mu(z) \lesssim \left\|\mu \right\|_{\mathcal{CM}_s} \left\|f \right\|_{\calQ_t}^2, 
$$
and hence
\begin{equation} \label{20240614eq21}
M_I^{1, 1}(f) \lesssim \left\|f \right\|_{\calB}^2 \cdot \frac{\mu(S(I))}{|I|^s}+\left\|\mu \right\|_{\mathcal{CM}_s} \left\|f \right\|_{\calQ_t}^2 \lesssim \left\|\mu \right\|_{\mathcal{CM}_s} \left\|f \right\|_{\calQ_t}^2. 
\end{equation} 

Finally, by \eqref{20240614eq02}, \eqref{20240614eq03} , \eqref{20240614eq04}, \eqref{20240614eq04}, and \eqref{20240614eq21}, we have 
\begin{eqnarray*}
\frac{1}{|I|^s \left(\log{\frac{2}{|I|}}\right)^2}\int_{S(I)}|f(z)|^2d\mu(z)%
&=& M_I(f) \lesssim M_I^1(f)+M_I^2(f) \\
&\lesssim& M_I^{1, 1}(f)+M_I^{1, 2}(f)+M_I^2(f) \lesssim  \left\|\mu \right\|_{\mathcal{CM}_s} \left\|f \right\|_{\calQ_t}^2.
\end{eqnarray*}
Taking now the supremum over all $I \subseteq \partial \D$, the proof of the sufficient part of Theorem \ref{20240613thm02} is complete. $\hfill{\square}$

\medskip 

\begin{rem} \label{20240626rem01}
Here, we give an alternative way to estimate the term $N_2(f, I, z)$, which, although, is weaker than our original estimate \eqref{20240614eq13}, however, suffices our purpose. The idea here is close to the estimate of $N_1(f, I, z)$. First, by Cauchy Schwarz, we have 
\begin{eqnarray*}
N_2(f, I, z)%
&\lesssim&  \left( \int_{\D} \frac{|f(w)-f(a_I)|^2(1-|w|^2)^{2s-2\gamma} dA(w)}{\left|1-\overline{a_I} w \right|^{2(s+1-2\gamma)} \left|1- \overline{w} z \right|^{2s}} \right) \cdot \left( \int_\D \frac{(1-|w|^2)^{2(\beta-s+\gamma)}dA(w)}{\left|1-\overline{a_I} w\right|^{4 \gamma} \left|1- \overline{w} z \right|^{2(1+\beta-s)}} \right) \\
&\lesssim& \frac{1}{|I|^{2\gamma}} \cdot \left( \int_{\D} \frac{|f(w)-f(a_I)|^2(1-|w|^2)^{2s-2\gamma} dA(w)}{\left|1-\overline{a_I} w \right|^{2(s+1-2\gamma)} \left|1- \overline{w} z \right|^{2s}} \right), 
\end{eqnarray*}
where we recall that $\gamma=\frac{s-t}{2}$. Therefore, 
\begin{eqnarray*}
&& \frac{|I|^s}{\left(\log \frac{2}{|I|} \right)^2} \int_{S(I)} N_2(f, I, z) d\mu(z) \\
&& \quad \lesssim  \frac{|I|^{s-2\gamma}}{\left(\log \frac{2}{|I|} \right)^2} \int_{S(I)} \left( \int_{\D} \frac{|f(w)-f(a_I)|^2(1-|w|^2)^{2s-2\gamma} dA(w)}{\left|1-\overline{a_I} w \right|^{2(s+1-2\gamma)} \left|1- \overline{w} z \right|^{2s}} \right) d\mu(z) \\
&& \quad \lesssim \int_{\D} \frac{|f(w)-f(a_I)|^2}{\left|1- \overline{a_I} w\right|^2} \cdot \frac{(1-|w|^2)^{s-2\gamma}(1-|a_I|)^{s-2\gamma}}{\left|1-\overline{a_I} w \right|^{2(s-2\gamma)}} \cdot \left( \int_{S(I)} \frac{(1-|w|^2)^s}{\left|1-\overline{w}z \right|^{2s}} d\mu(z) \right) dA(w) \\
&& \quad \lesssim \left\|\mu \right\|_{\mathcal{CM}_s} \int_{\D} \frac{|f(w)-f(a_I)|^2}{\left|1- \overline{a_I} w\right|^2} \cdot (1-|\varphi_{a_I}(w)|^2)^{s-2\gamma} dA(w) \lesssim \left\|\mu \right\|_{\mathcal{CM}_s} \left\|f \right\|_{\calQ_t}^2, 
\end{eqnarray*}
where the last estimate was contained in the proof of \cite[Theorem 1.1]{Xiaojie2008} (or more generally, \cite[Proposition 2.8]{PauZhao2014}). 
\end{rem}

\subsection{Compactness}

As some byproducts of Theorems \ref{20240613thm02} and \ref{20240624thm01}, we get their corresponding compactness counterparts. To begin with, we recall the following lemma. 

\begin{lem} [{\cite[Lemma 2.2]{LiLiuYuan2017} and \cite[Lemma 4]{LiuLou2015}}]\label{20240621lem01}
For $0<r<1$, let $\one_{\left\{z:|z|<r\right\}}$ be the characterization function of the set $\{z:|z|<r\}$. If $\mu$ is an $s$-Carleson measure on $\D$, then $\mu$ is a vanishing $s$-Carleson measure if and only if $\|\mu-\mu_{r}\|_{\mathcal{CM}_s}\to 0$ as $r\to 1^{-}$, where $\mu_{r}:=\one_{\left\{z:|z|<r\right\}}\mu$. 
\end{lem}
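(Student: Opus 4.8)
The plan rests on a single observation: $\mu-\mu_r$ is the restriction of $\mu$ to the boundary annulus $A_r:=\{z\in\D:\ r\le|z|<1\}$, so that for every subarc $I\subseteq\partial\D$ one has
\[
(\mu-\mu_r)(S(I))=\mu\bigl(S(I)\cap A_r\bigr)\le\min\bigl\{\,\mu(S(I)),\ \mu(A_r)\,\bigr\}.
\]
I will also use that an $s$-Carleson measure has finite mass on each $A_r$ with $r>0$ (cover $A_r$ by a Carleson tent $S(I)$ with $|I|$ close to $|\partial\D|$), and that $\bigcap_{0<r<1}A_r=\emptyset$; together with continuity of measure from above, this gives $\mu(A_r)\to0$ as $r\to1^-$. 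These are the only analytic inputs needed.

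\emph{Vanishing $\Rightarrow$ norm convergence.} Fix $\eps>0$. The vanishing hypothesis provides $\delta>0$ with $\mu(S(I))/|I|^s<\eps$ whenever $|I|<\delta$. I would then split the supremum defining $\|\mu-\mu_r\|_{\mathcal{CM}_s}$ over arcs $I$ according to whether $|I|<\delta$ or $|I|\ge\delta$. In the first case $(\mu-\mu_r)(S(I))\le\mu(S(I))<\eps|I|^s$. In the second, $(\mu-\mu_r)(S(I))\le\mu(A_r)$, so $(\mu-\mu_r)(S(I))/|I|^s\le\mu(A_r)/\delta^s$, which is $<\eps$ provided $r$ is close enough to $1$. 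Taking the supremum over all $I$ yields $\|\mu-\mu_r\|_{\mathcal{CM}_s}\le\eps$ for such $r$.

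\emph{Norm convergence $\Rightarrow$ vanishing.} Fix $\eps>0$ and pick $r\in(0,1)$ with $\|\mu-\mu_r\|_{\mathcal{CM}_s}<\eps$. The point is the elementary geometric fact that if $|I|<1-r$, then every $z\in S(I)$ has $|z|>1-|I|>r$, hence $S(I)\subseteq A_r$ and $\mu(S(I))=(\mu-\mu_r)(S(I))$. Therefore $\mu(S(I))/|I|^s\le\|\mu-\mu_r\|_{\mathcal{CM}_s}<\eps$ for every arc with $|I|<1-r$, i.e. $\mu$ is a vanishing $s$-Carleson measure.

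Both implications are short; the only step requiring a little care — and thus the main, if modest, obstacle — is controlling the long arcs in the forward direction, which forces one to record that an $s$-Carleson measure has finite mass near $\partial\D$ and to invoke continuity of measure from above so that $\mu(A_r)\to0$.
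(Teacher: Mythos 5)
Your proof is correct and complete: the forward direction (splitting the supremum into arcs with $|I|<\delta$, handled by the vanishing hypothesis, and arcs with $|I|\ge\delta$, handled by $\mu(A_r)\to 0$ via finiteness of $\mu$ near $\partial\D$ and continuity from above) and the converse (observing $S(I)\subseteq A_r$ whenever $|I|<1-r$, so $\mu(S(I))=(\mu-\mu_r)(S(I))\le\|\mu-\mu_r\|_{\mathcal{CM}_s}|I|^s$) are both sound. The paper itself gives no proof of this lemma, quoting it from \cite[Lemma 2.2]{LiLiuYuan2017} and \cite[Lemma 4]{LiuLou2015}, and your argument is essentially the standard one found in those references, so there is nothing further to reconcile.
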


Here, we say $\mu$ is a \emph{vanishing $s$-Carleson measure} if 
$$
\lim\limits_{|I| \to 0} \frac{\mu(S(I))}{|I|^s}=0.
$$
We have the following result.

\begin{thm} \label{20240621thm01}
For any $0<t<s \le 1$ and $\mu$ being a postive Borel measure on $\D$, $id: \calQ_t \mapsto \calT_{s, 2}^2(\mu)$ is compact if and only if $\mu$ is a vanishing $s$-Carleson measure. More generally, we have $id: F(p, p-2, s) \mapsto \calT_{s, p}^p$ is compact if and only if $\mu$ is a vanishing $s$-Carleson measure. 
\end{thm}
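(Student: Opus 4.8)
To prove Theorem~\ref{20240621thm01} I would run the standard compactness machinery, bootstrapping from the boundedness statements Theorems~\ref{20240613thm02} and \ref{20240624thm01} together with the truncation Lemma~\ref{20240621lem01}. The starting point is the usual reformulation: $id:\calQ_t\mapsto\calT_{s,2}^2(\mu)$ is compact if and only if $\|f_n\|_{\calT_{s,2}^2(\mu)}\to0$ whenever $(f_n)$ is bounded in $\calQ_t$ and $f_n\to0$ uniformly on compact subsets of $\D$. The ``if'' direction uses that a $\calQ_t$-bounded sequence is a normal family (via $\calQ_t\subseteq\calB$ and $|g(z)|\lesssim\|g\|_{\calB}\log\frac{2}{1-|z|}$), so a subsequence converges uniformly on compacta to some $g\in\calQ_t$ (lower semicontinuity of $\|\cdot\|_{\calQ_t}$ coming from Fatou in its defining integral), and then $f_{n_k}-g$ is an admissible test sequence. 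The ``only if'' direction uses that convergence in $\calT_{s,2}^2(\mu)$ controls $\int_{S(I)}|\cdot|^2\,d\mu$ for every arc $I$ — in particular over compact subsets of $\D$, taking $I$ to be the full circle — so a uniform-on-compacta limit of such a sequence is $0$ $\mu$-a.e. Both facts are by now routine and implicit in, e.g., \cite{LiuLou2015,PauZhao2014,LiLiuYuan2017}.

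For the \emph{sufficiency} direction, assume $\mu$ is a vanishing $s$-Carleson measure; by Lemma~\ref{20240621lem01}, $\|\mu-\mu_r\|_{\mathcal{CM}_s}\to0$ as $r\to1^-$, where $\mu_r=\one_{\{|z|<r\}}\mu$. Given $(f_n)$ bounded in $\calQ_t$ with $f_n\to0$ uniformly on compacta and given $\eps>0$, fix $r<1$ with $\|\mu-\mu_r\|_{\mathcal{CM}_s}<\eps$ and split $\int_{S(I)}|f_n|^2\,d\mu$ into the $\mu_r$-part and the $(\mu-\mu_r)$-part. The $(\mu-\mu_r)$-part is handled by the quantitative sufficiency half of Theorem~\ref{20240613thm02} applied to the $s$-Carleson measure $\mu-\mu_r$, contributing $\lesssim\|\mu-\mu_r\|_{\mathcal{CM}_s}\|f_n\|_{\calQ_t}^2\lesssim\eps$ to the $\calT_{s,2}^2$-quantity; the $\mu_r$-part vanishes when $|I|<1-r$ (as $S(I)$ then avoids $\{|z|\le r\}$) and is otherwise $\le C_r\,\mu(\D)\sup_{|z|\le r}|f_n|^2\to0$ as $n\to\infty$, the weight $|I|^{-s}(\log\frac{2}{|I|})^{-2}$ being bounded by a constant $C_r$ on $\{|I|\ge1-r\}$. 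Hence $\limsup_n\|f_n\|_{\calT_{s,2}^2(\mu)}^2\lesssim\eps$, and letting $\eps\downarrow0$ gives $\|f_n\|_{\calT_{s,2}^2(\mu)}\to0$; by the first paragraph, $id$ is compact.

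For the \emph{necessity} direction, assume $id$ is compact, hence bounded, hence $\mu$ is $s$-Carleson by Theorem~\ref{20240613thm02}. Fix arcs $I_n$ with $|I_n|\to0$, write $a_n=a_{I_n}=(1-|I_n|)e^{i\theta_n}$, and consider the test functions
\[
f_n(z):=\frac{\left(\log\frac{e}{1-\overline{a_n}z}\right)^{2}}{\log\frac{e}{1-|a_n|^{2}}}.
\]
These satisfy $\sup_n\|f_n\|_{\calQ_t}\lesssim1$ — because $|1-\overline{a_n}z|\ge1-|a_n|\gtrsim1-|a_n|^2$ forces $\big|\log\frac{e}{1-\overline{a_n}z}\big|\lesssim\log\frac{e}{1-|a_n|^2}$, whence $|f_n'(z)|\lesssim|a_n|/|1-\overline{a_n}z|$ and the $\calQ_t$-seminorm of $f_n$ is dominated by that of $\log\frac{2}{1-\overline{a_n}z}$, which is $\lesssim1$ uniformly as used in the necessity part of Theorem~\ref{20240613thm02}; moreover $f_n\to0$ uniformly on compacta, since $|f_n(z)|\lesssim(\log\frac{e}{1-|z|})^2/\log\frac{e}{1-|a_n|^2}$. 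Crucially, on $S(I_n)$ one has $|1-\overline{a_n}z|\simeq|I_n|\simeq1-|a_n|^2$, so $|f_n(z)|\simeq\log\frac{2}{|I_n|}$ there, i.e.\ $|f_n|^2\simeq(\log\frac{2}{|I_n|})^2$ on $S(I_n)$. Since $id$ is compact, every subsequence of $(id(f_n))$ has a further subsequence converging in $\calT_{s,2}^2(\mu)$, and by the first paragraph its limit is $0$ $\mu$-a.e.; hence $\|f_n\|_{\calT_{s,2}^2(\mu)}\to0$, and therefore
\[
\frac{\mu(S(I_n))}{|I_n|^s}\simeq\frac{1}{|I_n|^{s}\left(\log\frac{2}{|I_n|}\right)^{2}}\int_{S(I_n)}|f_n|^{2}\,d\mu\le\|f_n\|_{\calT_{s,2}^2(\mu)}^{2}\to0 .
\]
As $(I_n)$ was arbitrary with $|I_n|\to0$, $\mu$ is a vanishing $s$-Carleson measure. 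The $F(p,p-2,s)$ assertion is proved verbatim with the exponent $2$ replaced by $p$: the same $f_n$ obeys $\|f_n\|_{F(p,p-2,t)}\lesssim1$ (by the analogous estimate, now using the necessity test function behind Theorem~\ref{20240624thm01}) and $|f_n|^p\simeq(\log\frac{2}{|I_n|})^p$ on $S(I_n)$, matching the $(\log\frac{2}{|I|})^p$ weight in the definition of $\calT_{s,p}^p(\mu)$.

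The step I expect to be the real obstacle is producing the test family in the necessity direction: one needs functions that are simultaneously bounded in $\calQ_t$ (resp.\ $F(p,p-2,t)$), tend to $0$ uniformly on compacta, and have modulus comparable to $\log\frac{2}{|I|}$ on $S(I)$ so as to absorb the logarithmic weight in the tent-space norm. The classical choice $\log\frac{2}{1-\overline{a_I}z}$ meets the first and third requirements but not the second, while normalizing it by $\log\frac{2}{|I|}$ forces the second at the cost of the third; the squared-and-normalized logarithm above reconciles all three, and this is the only genuinely new point. Everything else — the sequential description of compactness, the $\mu_r$-truncation via Lemma~\ref{20240621lem01}, and the fact that tent-norm convergence dominates local $L^2(\mu)$-convergence — is standard bookkeeping.
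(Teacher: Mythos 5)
Your proposal is correct and follows essentially the same route as the paper: for necessity it uses the same normalized squared-logarithm test functions $\bigl(\log\frac{e}{1-\overline{a_{I_j}}z}\bigr)^2\big/\log\frac{e}{1-|a_{I_j}|^2}$ adapted to arcs with $|I_j|\to 0$, and for sufficiency the same truncation $\mu=\mu_r+(\mu-\mu_r)$ via Lemma \ref{20240621lem01}, handling the Carleson tail with the boundedness result (Theorem \ref{20240613thm02}) and the $\mu_r$-part by uniform convergence on compacta. The only differences are cosmetic bookkeeping in how the $\mu_r$-term is bounded for large arcs.
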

 \begin{proof} 
 Again, we only focus on the case when $p=2$. We first show the necessity. Let $\{I_{j}\}$ be a sequence of subarcs of $\partial\D$ such that $|I_j|\to 0$ as $j \to \infty$. Let further
$$f_{j}(z):=\left(\log\frac{1}{1-|a_{I_j}|^2}\right)^{-1}\left(\log\frac{1}{1-\overline{a_{I_j}}z}\right)^2,   \qquad   z\in \D.$$
Then it is clear that $\|f_j\|_{\calQ_t}\lesssim 1$ and $f_j\to 0$ uniformly on compact subsets of $\D$. Since $id: \calQ_t \mapsto \calT_{s, 2}^2(\mu)$ is compact, it is well-known that $\left\|f_j \right\|_{\calT_{s, 2}^2} \to 0$ as $j \to 0$. Therefore, 
\begin{eqnarray*}
\frac{\mu(S(I_j))}{|I_j|^s}\lesssim \frac{\int_{S(I_j)}|f_j(z)|^2d\mu(z)}{|I_j|^s\left( 
\log\frac{2}{|I_j|} \right)^2}\lesssim \|f_j\|_{\calT_{s,2}^2}. 
\end{eqnarray*}
The desired claim follows by letting $j \to 0$ on both sides of the above estimate.

\medskip 

Next, we prove the sufficient part. Let $\{f_n\}$ be a bounded sequence in $\calQ_t$ with $f_{n}(z)\to 0$ uniformly on compact subsets of $\D$. To prove $id: \calQ_t \mapsto \calT_{s, 2}^2(\mu)$ is compact, it suffices to show that $\lim\limits_{n \to \infty}\|f_n\|_{\calT_{s, 2}^2(\mu)}= 0$. 

Note that for any $I \subseteq \partial D$ and $r \in (0, 1)$, using Theorem \ref{20240613thm02}, one has 
\begin{eqnarray*} \label{20240625eq30}
&&\frac{1}{|I|^s\left( 
\log\frac{2}{|I|} \right)^2}\int_{S(I)}|f_n(z)|^2d\mu(z)\lesssim \frac{1}{|I|^s\left( 
\log\frac{2}{|I|} \right)^2}\int_{S(I)}|f_n(z)|^2 d\mu_r(z) \nonumber \\
&& \qquad \qquad \qquad +\frac{1}{|I|^s \left( \log\frac{2}{|I|} \right)^2}\int_{S(I)}|f_n(z)|^2d(\mu-\mu_r)(z)\\
&& \qquad \lesssim \frac{1}{|I|^s} \int_{|z| \le r} |f_n(z)|^2 d\mu(z)+ \left\|\mu-\mu_r \right\|_{\mathcal{CM}_s} \left\|f_n \right\|_{\calQ_t}^2. 
\end{eqnarray*}
Take any $\epsilon>0$. Since $\mu$ is a vanishing $s$-Carleson measure, by Lemma \ref{20240621lem01}, we can find an $r \in (0, 1)$, such that for any $n \ge 1$, one has $\left\|\mu-\mu_r \right\|_{\mathcal{CM}_s} \left\|f_n \right\|_{\calQ_t}^2<\epsilon$.  
Fix $r$ and then take an $N>0$, such that for any $n>N$, $|f_n(z)|<\sqrt{\frac{\epsilon}{ \left\|\mu \right\|_{\mathcal{CM}_s}}}, \ z \in \{z \in \D: |z|\le r\}$. Therefore, by \eqref{20240625eq30}, we see that when $n>N$, 
$$
\frac{1}{|I|^s\left( 
\log\frac{2}{|I|} \right)^2}\int_{S(I)}|f_n(z)|^2d\mu(z) \lesssim \left\| \mu \right\|_{\mathcal{CM}_s} \cdot \frac{\epsilon}{\left\|\mu \right\|_{\mathcal{CM}_s}}+\epsilon \lesssim \epsilon, 
$$
which implies the desired result. The proof is complete. 
\end{proof}

\bigskip 

\section{Proof of Theorem \ref{20240624thm10}}
The goal of the second part of this paper is to study the near-endpoints Carleson embedding for the $F(p, p\alpha-2, s)$ spaces for $\alpha>1$. The idea of proving Theorem \ref{20240624thm10} is similar to the proof of Theorems \ref{20240613thm02} and \ref{20240624thm01}. 

\subsection{Initial reductions}
We prove the necessity first. Recall that for $\alpha>1$, $F(p, p\alpha-2, t) \subseteq \calB^\alpha$. By \cite[Proposition 7]{Zhu1993}, we have 
\begin{equation} \label{20240625eq02}
\left\|f \right\|_{\calB_\alpha} \simeq \sup\limits_{z \in \D} |f(z)|(1-|z|^2)^{\alpha-1}.
\end{equation} 
Therefore, for each $I \subseteq \partial \D$, we consider the test function 
$$
f_{a_I}(z)=\frac{1}{(1-\overline{a_I}z)^{\alpha-1}}.
$$
By \cite[Lemma 2.6]{PauZhao2014}, we have $\sup\limits_{a_I \in \D} \left\|f_{a_I} \right\|_{F(p, p\alpha-2, t)} \lesssim  1$. Therefore, 
$$
\frac{\mu(S(I))}{|I|^{s+p(\alpha-1)}} \simeq \frac{1}{|I|^s} \int_{S(I)} \left|f_{a_I}(z) \right|^p d\mu(z) \lesssim \left\|f_{a_I} \right\|_{F(p, p\alpha-2, t)} \lesssim 1. 
$$
Taking the supremum over all $I \subseteq \partial \D$, we conclude that $\mu$ is an $s+p(\alpha-1)$-Carleson measure. 

\medskip 

Next, we prove the sufficient part. For any $I \subseteq \partial \D$, we first write
\begin{equation} \label{20240625eq01}
\frac{1}{|I|^s} \int_{S(I)} |f(z)|^p d\mu(z) \lesssim \calM_I^1(f)+\calM_I^2(f), 
\end{equation} 
where 
$$
\calM_I^1(f):=\frac{1}{|I|^s} \int_{S(I)} |f(z)-f(a_I)|^pd\mu(z) \quad \textrm{and} \quad 
\calM_I^2(f):=\frac{1}{|I|^s} \int_{S(I)} |f(a_I)|^p d\mu(z). 
$$
The estimate of $\calM_I^2(f)$ is routine. Indeed, by \eqref{20240625eq02}, we have 
\begin{eqnarray} \label{20240625eq03}
\calM_I^2(f)%
&=& \frac{\mu(S(I))}{|I|^s} \cdot \left|f(a_I) \right|^p \lesssim \frac{\mu(S(I))}{|I|^s} \cdot \left\|f \right\|^p_{\calB_\alpha} \cdot (1-|a_I|^2)^{p(1-\alpha)} \nonumber \\
&\lesssim& \left\|f \right\|^p_{F(p, p\alpha-2, t)} \left\|\mu \right\|_{\mathcal{CM}_{s+p(\alpha-1)}}.
\end{eqnarray}
Now to estimate $\calM_I^1(f)$, we write
\begin{eqnarray*}
\calM_I^1(f)%
&\simeq&  |I|^{s+2p(\alpha-1)} \int_{S(I)} \left| \frac{f(z)-f(a_I)}{(1-\overline{a_I}z)^{\frac{2s}{p}+2(\alpha-1)}} \right|^p d\mu(z) \\
&\lesssim& \calM_I^{1, 1}(f)+\calM_I^{2, 2}(f),
\end{eqnarray*}
where
$$
\calM_I^{1, 1}(f):=|I|^{s+2p(\alpha-1)} \int_{S(I)} \left| \frac{f(z)-f(a_I)}{(1-\overline{a_I}z)^{\frac{2s}{p}+2(\alpha-1)}}+f(a_I)\right|^p d\mu(z)
$$
and
$$
\calM_I^{1, 2}(f):=|I|^{s+2p(\alpha-1)} \int_{S(I)} \left| f(a_I)\right|^p d\mu(z).
$$
Again, we can estimate the term $\calM_I^{1, 2}(f)$ by using \eqref{20240625eq02}. Indeed, 
\begin{eqnarray} \label{20240626eq01}
\calM_I^{1, 2}(f)%
&\lesssim&  |I|^{s+2p(\alpha-1)}\cdot \frac{ \mu(\D)\left\|f \right\|^p_{\calB_\alpha}}{ (1-|a_I|^2)^{p(\alpha-1)}} \nonumber \\
&\lesssim& \left\|\mu \right\|_{\mathcal{CM}_{s+p(\alpha-1)}} \left\|f \right\|_{\calB_\alpha}^p \lesssim \left\|\mu \right\|_{\mathcal{CM}_{s+p(\alpha-1)}} \left\|f \right\|_{F(p, p\alpha-2, t)}^p.
\end{eqnarray}
We are now left to estimate the term $M_I^{1, 1}(f)$. Observe now that for each $I \subseteq \partial \D$, the holomorphic function $\frac{f(z)-f(a_I)}{(1-\overline{a_I}z)^{\frac{2s}{p}+2(\alpha-1)}}+f(a_I) \in \calB_\alpha$. This means that we can pick a $\beta>1$ sufficiently large, such that 
$$
\int_{\D} \left| \left(\frac{f(z)-f(a_I)}{(1-\overline{a_I}z)^{\frac{2s}{p}+2(\alpha-1)}}+f(a_I) \right)' \right|(1-|z|^2)^{\beta}<+\infty.
$$
This together with the fact that $\left[ \frac{f(z)-f(a_I)}{(1-\overline{a_I}z)^{\frac{2s}{p}+2(\alpha-1)}}+f(a_I) \right] \bigg |_{z=0}=0$ yields Lemma \ref{20240614lem01} still applies in this case. Hence, we have 
\begin{eqnarray} \label{20240426eq02}
\calN(f, I, z)%
&:=& \left|  \frac{f(z)-f(a_I)}{(1-\overline{a_I}z)^{\frac{2s}{p}+2(\alpha-1)}}+f(a_I) \right|^p \nonumber \\
&\lesssim& \left[\int_\D \frac{\left| \frac{d}{dw} \left( \frac{f(w)-f(a_I)}{(1-\overline{a_I}w)^{\frac{2s}{p}+2(\alpha-1)}} \right) \right|(1-|w|^2)^{\beta}}{\left|1-\overline{w}z \right|^{1+\beta}} dA(w) \right]^p \nonumber \\
&\lesssim& \calN_1(f, I, z)+\calN_2(f, I, z),
\end{eqnarray}
where 
$$
\calN_1(f, I, z):= \left[ \int_\D \frac{\left|f'(w) \right|(1-|w|^2)^{\beta}}{\left|1-\overline{a_I}w \right|^{\frac{2s}{p}+2(\alpha-1)}\left|1-\overline{w}z \right|^{1+\beta}} dA(w) \right]^p
$$
and
$$
\calN_2(f, I, z):=\left[ \int_\D \frac{\left|f(w)-f(a_I) \right|(1-|w|^2)^{\beta}}{\left|1-\overline{a_I}w \right|^{\frac{2s}{p}+2(\alpha-1)+1}\left|1-\overline{w}z \right|^{1+\beta}} dA(w) \right]^p
$$

\subsection{Estimates of $\calN_1(f, I, z)$ and $\calN_2(f, I, z)$}

Since $s>t$, we denote
$$
\gamma:=\frac{s-t}{p}>0.
$$
We first estimate the term of $\calN_1(f, I, z)$. An application of H\"older's inequality yields
\begin{eqnarray} \label{20240626eq03}
&& \calN_1(f, I, z)=\bigg[ \int_{\D}  \frac{|f'(w)|(1-|w|^2)^{\alpha-\frac{2}{p}} (1-|w|^2)^{\frac{2s}{p}-\gamma}}{\left|1-\overline{a_I} w \right|^{\frac{2s}{p}+2(\alpha-\gamma-1)} \left|1-\overline{w}z \right|^{\alpha+\frac{2s}{p}-1}} \nonumber \\
&& \qquad \qquad \qquad \qquad \qquad \qquad   \cdot \frac{(1-|w|^2)^{\beta-\alpha+\frac{2}{p}-\frac{2s}{p}+\gamma}}{\left|1-\overline{a_I} w \right|^{2\gamma} \left|1-\overline{w}z \right|^{\beta-\alpha+\frac{2}{p}-\frac{2s}{p}+\frac{2}{p'}}}dA(w)\bigg]^{p} \nonumber \\
&& \quad \lesssim  \left( \int_{\D} \frac{|f'(w)|^p (1-|w|^2)^{p\alpha-2} (1-|w|^2)^{2s-p\gamma}}{\left|1-\overline{a_I} w \right|^{2s+2p(\alpha-\gamma-1)} \left|1-\overline{w}z \right|^{\alpha p+2s-p}}  dA(w) \right) \nonumber \\
&&  \qquad \qquad \qquad \qquad \qquad \qquad   \cdot \left( \int_{\D} \frac{(1-|w|^2)^{p'\left(\beta-\alpha+\frac{2}{p}-\frac{2s}{p}+\gamma\right)}}{\left|1-\overline{a_I} w \right|^{2p' \gamma} \left|1-\overline{w}z \right|^{p'(\beta-\alpha+\frac{2}{p}-\frac{2s}{p})+2}} dA(w) \right)^{\frac{p}{p'}}.
\end{eqnarray}
Here $p'=\frac{p}{p-1}$ is the conjugate of $p$. Putting
$$
\delta=p'\left(\beta-\alpha+\frac{2}{p}-\frac{2s}{p}+\gamma \right), \ b=p'\left(\beta-\alpha+\frac{2}{p}-\frac{2s}{p} \right)+2, \ c=2p'\gamma, \ \textrm{and} \ k=0,
$$
and verifying that\footnote{Recall that here we can pick any $\beta>1$ sufficiently large.}
$$
b+c-\del=2+p'\gamma>2, \ b-\delta=2-p'\gamma<2, \ \textrm{and} \ c-\del=p'\left(\gamma-\beta+\alpha-\frac{2}{p}+\frac{2s}{p} \right)<2, 
$$
we see that Lemma \ref{20240614lem02} yields
\begin{equation} \label{20240626eq10}
\int_{\D} \frac{(1-|w|^2)^{p'\left(\beta-\alpha+\frac{2}{p}-\frac{2s}{p}+\gamma\right)}}{\left|1-\overline{a_I} w \right|^{2p' \gamma} \left|1-\overline{w}z \right|^{p'(\beta-\alpha+\frac{2}{p}-\frac{2s}{p})+2}} dA(w) \simeq \frac{1}{\left|1-\overline{a_I} z \right|^{p'\gamma}}. 
\end{equation} 
Plugging the above estimate back to \eqref{20240626eq03}, we conclude that 
\begin{equation} \label{20240626eq04}
\calN_1(f, I, z) \lesssim \frac{1}{|1-\overline{a_I} z|^{p\gamma}} \cdot  \left( \int_{\D} \frac{|f'(w)|^p (1-|w|^2)^{p\alpha-2} (1-|w|^2)^{2s-p\gamma}}{\left|1-\overline{a_I} w \right|^{2s+2p(\alpha-\gamma-1)} \left|1-\overline{w}z \right|^{\alpha p+2s-p}}  dA(w) \right). 
\end{equation} 

\medskip 

Next, we must estimate the term $\calN_2(f, I, z)$. Here, we adapt the idea in Remark \ref{20240626rem01}. More precisely, by \eqref{20240626eq10}, we have 
\begin{eqnarray} \label{20240626eq11}
&& \calN_2(f, I, z)=\bigg[ \int_{\D}  \frac{|f(w)-f(a_I)|(1-|w|^2)^{\alpha-\frac{2}{p}} (1-|w|^2)^{\frac{2s}{p}-\gamma}}{\left|1-\overline{a_I} w \right|^{\frac{2s}{p}+2(\alpha-\gamma-1)+1} \left|1-\overline{w}z \right|^{\alpha+\frac{2s}{p}-1}} \nonumber \\
&& \qquad \qquad \qquad \qquad \qquad \qquad   \cdot \frac{(1-|w|^2)^{\beta-\alpha+\frac{2}{p}-\frac{2s}{p}+\gamma}}{\left|1-\overline{a_I} w \right|^{2\gamma} \left|1-\overline{w}z \right|^{\beta-\alpha+\frac{2}{p}-\frac{2s}{p}+\frac{2}{p'}}}dA(w)\bigg]^{p} \nonumber \\
&& \quad \lesssim  \left( \int_{\D} \frac{|f(w)-f(a_I)|^p (1-|w|^2)^{p\alpha-2} (1-|w|^2)^{2s-p\gamma}}{\left|1-\overline{a_I} w \right|^{2s+2p(\alpha-\gamma-1)+p} \left|1-\overline{w}z \right|^{\alpha p+2s-p}}  dA(w) \right) \nonumber \\
&&  \qquad \qquad \qquad \qquad \qquad \qquad   \cdot \left( \int_{\D} \frac{(1-|w|^2)^{p'\left(\beta-\alpha+\frac{2}{p}-\frac{2s}{p}+\gamma\right)}}{\left|1-\overline{a_I} w \right|^{2p' \gamma} \left|1-\overline{w}z \right|^{p'(\beta-\alpha+\frac{2}{p}-\frac{2s}{p})+2}} dA(w) \right)^{\frac{p}{p'}} \nonumber \\
&& \quad \lesssim \frac{1}{|1-\overline{a_I}z|^{p\gamma}} \cdot \left( \int_{\D} \frac{|f(w)-f(a_I)|^p (1-|w|^2)^{p\alpha-2} (1-|w|^2)^{2s-p\gamma}}{\left|1-\overline{a_I} w \right|^{2s+2p(\alpha-\gamma-1)+p} \left|1-\overline{w}z \right|^{\alpha p+2s-p}}  dA(w) \right). 
\end{eqnarray}

\begin{rem}
In the case when $\alpha>1$, it is \emph{not} efficient to estimate the term $|f(w)-f(a_I)|$ in $\calN_2(f, I, z)$ directly via
$$
|f(w)| \lesssim \left\|f \right\|_{\calB_\alpha}(1-|w|^2)^{1-\alpha}
$$
as in the previous case when we deal with the near-endpoints $\calQ_s$ Carleson embedding (see, \eqref{20240626eq20}). This is because the integral 
$$
\int_\D \frac{(1-|w|^2)^{\beta+1-\alpha}}{\left|1-\overline{a_I}w \right|^{\frac{2s}{p}+2(\alpha-1)+1}\left|1-\overline{w}z \right|^{1+\beta}} dA(w) 
$$
can no longer be controlled by either a certain power of $\frac{1}{1-|a_I|^2}$ or $\log \frac{1}{1-|a_I|^2}$ when $\alpha \ge 2$, however, will be of the magnitude either $\log \frac{1}{1-|z|^2}$ if $\alpha=2$ or $\frac{1}{(1-|z|^2)^{\alpha-2}}$ if $\alpha>2$ (see, \cite[Theorem 3.1]{zhangLishangGuo2018}), which blows up near $\partial \D$.
\end{rem}

Plugging now both estimates \eqref{20240626eq04} and \eqref{20240626eq11} back to \eqref{20240426eq02}, we see that 
\begin{eqnarray*}
&& \calN(f, I, z) \lesssim \frac{1}{|1-\overline{a_I} z|^{p\gamma}} \cdot  \left( \int_{\D} \frac{|f'(w)|^p (1-|w|^2)^{p\alpha-2} (1-|w|^2)^{2s-p\gamma}}{\left|1-\overline{a_I} w \right|^{2s+2p(\alpha-\gamma-1)} \left|1-\overline{w}z \right|^{\alpha p+2s-p}}  dA(w) \right) \\
&& \qquad \qquad  \qquad +\frac{1}{|1-\overline{a_I}z|^{p\gamma}} \cdot \left( \int_{\D} \frac{|f(w)-f(a_I)|^p (1-|w|^2)^{p\alpha-2} (1-|w|^2)^{2s-p\gamma}}{\left|1-\overline{a_I} w \right|^{2s+2p(\alpha-\gamma-1)+p} \left|1-\overline{w}z \right|^{\alpha p+2s-p}}  dA(w) \right).
\end{eqnarray*}
Therefore, using again the fact that $z \in S(I)$, 
\begin{eqnarray} \label{20240626eq50}
&& \calM_I^{1, 1}(f)\lesssim \left|I \right|^{s+2p(\alpha-1)-p\gamma} \int_{S(I)}  \int_{\D} \frac{|f'(w)|^p (1-|w|^2)^{p\alpha-2} (1-|w|^2)^{2s-p\gamma}}{\left|1-\overline{a_I} w \right|^{2s+2p(\alpha-\gamma-1)} \left|1-\overline{w}z \right|^{\alpha p+2s-p}}  dA(w) d\mu(z) \nonumber \\
&& \quad + \left|I \right|^{s+2p(\alpha-1)-p\gamma} \int_{S(I)} \int_{\D} \frac{|f(w)-f(a_I)|^p (1-|w|^2)^{p\alpha-2} (1-|w|^2)^{2s-p\gamma}}{\left|1-\overline{a_I} w \right|^{2s+2p(\alpha-\gamma-1)+p} \left|1-\overline{w}z \right|^{\alpha p+2s-p}}  dA(w)  d\mu(z).
\end{eqnarray}
For the first term in \eqref{20240626eq50}, by Fubini, we see that
\begin{eqnarray} \label{20240626eq51}
&& \left|I \right|^{s+2p(\alpha-1)-p\gamma} \int_{S(I)}  \int_{\D} \frac{|f'(w)|^p (1-|w|^2)^{p\alpha-2} (1-|w|^2)^{2s-p\gamma}}{\left|1-\overline{a_I} w \right|^{2s+2p(\alpha-\gamma-1)} \left|1-\overline{w}z \right|^{\alpha p+2s-p}}  dA(w) d\mu(z) \nonumber \\
&& \quad =\int_{\D} |f'(w)|^p (1-|w|^2)^{p\alpha-2} \cdot \frac{(1-|a_I|^2)^{s-p\gamma}(1-|w|^2)^{s-p\gamma}}{\left|1-\overline{a_I}w \right|^{2(s-p\gamma)}} \nonumber \\
&& \qquad \qquad \qquad \qquad\cdot \left(\int_{S(I)} \frac{(1-|w|^2)^s}{\left|1-\overline{w} z\right|^{\alpha p+2s-p}} d\mu(z) \right) \cdot \left(\frac{1-|a_I|^2}{\left|1-\overline{a_I}w \right|} \right)^{2p(\alpha-1)} dA(w) \nonumber \\
&& \quad \lesssim \left\|f\right\|_{F(p, p\alpha-2, t)}^p \left\| \mu \right\|_{\mathcal{CM}_{s+p(\alpha-1)}}. 
\end{eqnarray}
Similarly, for the second term in \eqref{20240626eq50}, we get
\begin{eqnarray} \label{20240626eq52}
&& \left|I \right|^{s+2p(\alpha-1)-p\gamma} \int_{S(I)} \int_{\D} \frac{|f(w)-f(a_I)|^p (1-|w|^2)^{p\alpha-2} (1-|w|^2)^{2s-p\gamma}}{\left|1-\overline{a_I} w \right|^{2s+2p(\alpha-\gamma-1)+p} \left|1-\overline{w}z \right|^{\alpha p+2s-p}}  dA(w)  d\mu(z) \nonumber \\
&& \quad =\int_{\D} |f(w)-f(a_I)|^p (1-|w|^2)^{p\alpha-2} \cdot \frac{(1-|a_I|^2)^{s-p\gamma}(1-|w|^2)^{s-p\gamma}}{\left|1-\overline{a_I}w \right|^{2(s-p\gamma)+p}} \nonumber \\
&& \qquad \qquad \qquad \qquad\cdot \left(\int_{S(I)} \frac{(1-|w|^2)^s}{\left|1-\overline{w} z\right|^{\alpha p+2s-p}} d\mu(z) \right) \cdot \left(\frac{1-|a_I|^2}{\left|1-\overline{a_I}w \right|} \right)^{2p(\alpha-1)} dA(w) \nonumber \\
&& \quad \lesssim \left\|\mu \right\|_{\mathcal{CM}_{s+p(\alpha-1)}} \cdot (1-|a_I|^2)^{(s-p\gamma)+2p(\alpha-1)} \nonumber \\
&&  \qquad \qquad \qquad \qquad\cdot \int_{\D} \frac{|f(w)-f(a_I)|^p(1-|z|^2)^{(s-p\gamma)+p\alpha-2}}{\left|1-\overline{a_I} w \right|^{p+(s-p\gamma)+[s-p\gamma+2p(\alpha-1)]}} dA(w) \nonumber \\
&& \quad \lesssim \left\|f\right\|_{F(p, p\alpha-2, t)}^p \left\| \mu \right\|_{\mathcal{CM}_{s+p(\alpha-1)}}. 
\end{eqnarray}
Therefore, by \eqref{20240626eq50}, \eqref{20240626eq51}, and \eqref{20240626eq52}, we see that  
$$
\calM_I^{1, 1}(f) \lesssim \left\|f\right\|_{F(p, p\alpha-2, t)}^p \left\| \mu \right\|_{\mathcal{CM}_{s+p(\alpha-1)}}. 
$$
Finally, combining the above estimate with \eqref{20240625eq01}, \eqref{20240625eq03}, and \eqref{20240626eq01}, we conclude that 
$$
\frac{1}{|I|^s} \int_{S(I)} |f(z)|^p d\mu(z) \lesssim \left\|f\right\|_{F(p, p\alpha-2, t)}^p \left\| \mu \right\|_{\mathcal{CM}_{s+p(\alpha-1)}}, 
$$
which clearly implies the desired sufficient part. The proof is complete. $\hfill{\square}$

\medskip 

To this end, we would like to only state the compactness result in this case, whose proof can be modified accordingly from the argument in Theorem \ref{20240621thm01}, and hence we would like to leave the details to the interested reader. 

\begin{thm} \label{20240626thm01}
Let $\alpha>1, 0<t<s \le 1$ and $p>1$. Then $id: F(p, p\alpha-2, t) \to \frakT_{s, p}^{\infty}(\mu)$ is compact if and only if $\mu$ is a vanishing $\left[s+p(\alpha-1)\right]$-Carleson measure. 
\end{thm}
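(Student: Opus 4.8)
The plan is to run the same argument as in the proof of Theorem~\ref{20240621thm01}, now with Theorem~\ref{20240624thm10} playing the role of the boundedness input and Lemma~\ref{20240621lem01} applied with the exponent $s+p(\alpha-1)$ in place of $s$. Throughout I would use the standard characterization: $id: F(p,p\alpha-2,t)\to\frakT_{s,p}^{\infty}(\mu)$ is compact if and only if $\|f_n\|_{\frakT_{s,p}^{\infty}(\mu)}\to 0$ for every sequence $\{f_n\}$ that is bounded in $F(p,p\alpha-2,t)$ and converges to $0$ uniformly on compact subsets of $\D$; this is routine since point evaluations are continuous on both spaces and bounded subsets of $F(p,p\alpha-2,t)$ are normal families.

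\textbf{Necessity.} Given subarcs $I_j\subseteq\partial\D$ with $|I_j|\to 0$, the test function $(1-\overline{a_{I_j}}z)^{-(\alpha-1)}$ used in the necessity part of Theorem~\ref{20240624thm10} does not vanish on compacta, so I would instead test against the normalized kernels
$$
h_j(z):=\left(\frac{1-|a_{I_j}|^2}{(1-\overline{a_{I_j}}z)^2}\right)^{\alpha-1},
$$
which are the exact analog of the functions $\big(\log\tfrac{1}{1-|a_{I_j}|^2}\big)^{-1}\big(\log\tfrac{1}{1-\overline{a_{I_j}}z}\big)^2$ appearing in Theorem~\ref{20240621thm01} (they are the ``square of the natural test function divided by its value at $a_{I_j}$''). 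By \cite[Lemma~2.6]{PauZhao2014} (or a routine variant, using \cite[Proposition~7]{Zhu1993} and standard two-point integral estimates) one has $\sup_j\|h_j\|_{F(p,p\alpha-2,t)}\lesssim 1$, while $\alpha>1$ forces $|h_j(z)|\lesssim_K(1-|a_{I_j}|^2)^{\alpha-1}\to 0$ on each compact $K\subset\D$. Since $|1-\overline{a_{I_j}}z|\simeq|I_j|\simeq 1-|a_{I_j}|^2$ for $z\in S(I_j)$, we get $|h_j(z)|^p\simeq|I_j|^{-p(\alpha-1)}$, hence
$$
\frac{\mu(S(I_j))}{|I_j|^{s+p(\alpha-1)}}\simeq\frac{1}{|I_j|^s}\int_{S(I_j)}|h_j(z)|^p\,d\mu(z)\le\|h_j\|_{\frakT_{s,p}^{\infty}(\mu)}^p.
$$
Compactness of $id$ together with $h_j\to 0$ on compacta yields $\|h_j\|_{\frakT_{s,p}^{\infty}(\mu)}\to 0$, so $\mu(S(I_j))/|I_j|^{s+p(\alpha-1)}\to 0$; as $\{I_j\}$ was arbitrary, $\mu$ is a vanishing $[s+p(\alpha-1)]$-Carleson measure.

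\textbf{Sufficiency.} Let $\{f_n\}$ be bounded in $F(p,p\alpha-2,t)$ with $f_n\to 0$ uniformly on compacta (and note that $\mu$, being vanishing, is in particular $[s+p(\alpha-1)]$-Carleson). For $r\in(0,1)$ put $\mu_r:=\one_{\{|z|<r\}}\mu$; then for every subarc $I$,
$$
\frac{1}{|I|^s}\int_{S(I)}|f_n(z)|^p\,d\mu(z)\le\frac{1}{|I|^s}\int_{S(I)}|f_n(z)|^p\,d\mu_r(z)+\frac{1}{|I|^s}\int_{S(I)}|f_n(z)|^p\,d(\mu-\mu_r)(z).
$$
Since $\mu-\mu_r=\one_{\{|z|\ge r\}}\mu$ is $[s+p(\alpha-1)]$-Carleson with $\|\mu-\mu_r\|_{\mathcal{CM}_{s+p(\alpha-1)}}\le\|\mu\|_{\mathcal{CM}_{s+p(\alpha-1)}}$, Theorem~\ref{20240624thm10} bounds the second term by $C\,\|\mu-\mu_r\|_{\mathcal{CM}_{s+p(\alpha-1)}}\sup_n\|f_n\|_{F(p,p\alpha-2,t)}^p$, which tends to $0$ as $r\to 1^-$ by Lemma~\ref{20240621lem01}. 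For the first term, observe $S(I)\cap\{|z|<r\}=\emptyset$ whenever $|I|\le 1-r$, so it is nonzero only when $|I|>1-r$, in which case it is at most $(1-r)^{-s}\mu(\D)\sup_{|z|\le r}|f_n(z)|^p$, which tends to $0$ as $n\to\infty$ for $r$ fixed. Thus, given $\epsilon>0$, one first fixes $r$ to make the tail term $<\epsilon$ for all $n$, then picks $N$ to make the main term $<\epsilon$ for $n>N$, concluding $\|f_n\|_{\frakT_{s,p}^{\infty}(\mu)}\to 0$.

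\textbf{Main obstacle.} The only genuinely delicate point is the necessity step: one must exhibit a family of test functions that is simultaneously uniformly bounded in $F(p,p\alpha-2,t)$, vanishes uniformly on compacta, \emph{and} recovers the full ratio $\mu(S(I_j))/|I_j|^{s+p(\alpha-1)}$ on $S(I_j)$ with no extraneous vanishing factor (a factor such as $(1-|a_{I_j}|^2)^{\eta}$ would make the conclusion vacuous, since $\mu$ is already a Carleson measure). The normalized kernels $h_j$ work precisely because their homogeneity exponent $\alpha-1$ is matched to the weight exponent $p\alpha-2$ in the $F(p,p\alpha-2,t)$-norm; the single non-bookkeeping computation is the uniform bound $\sup_j\|h_j\|_{F(p,p\alpha-2,t)}\lesssim 1$, which is essentially \cite[Lemma~2.6]{PauZhao2014}. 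Everything else is a transcription of the argument in Theorem~\ref{20240621thm01}.
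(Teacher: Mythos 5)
Your proposal is correct and is precisely the modification of the proof of Theorem \ref{20240621thm01} that the paper alludes to but leaves to the reader: sufficiency via the boundedness result (Theorem \ref{20240624thm10}) applied to $\mu-\mu_r$ together with Lemma \ref{20240621lem01} for the exponent $s+p(\alpha-1)$, and necessity via suitably normalized test functions. Your choice $h_j(z)=\left(\frac{1-|a_{I_j}|^2}{(1-\overline{a_{I_j}}z)^2}\right)^{\alpha-1}$ is the natural analog of the logarithmic test functions used for $\calQ_t$, carries the correct $\calB^\alpha$-normalization (uniformly bounded in $F(p,p\alpha-2,t)$, vanishing on compacta since $\alpha>1$, and of size $|I_j|^{-(\alpha-1)}$ on $S(I_j)$), so the argument goes through as written.
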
 

\medskip 

\section{Further remarks} \label{20240626sec01}

Finally, we compare the near-endpoints Carleson embeddings with both Xiao-Pau-Zhao's logarithmic Carleson embeddings and the Carleson embeddings induced by the continuously embedding properties of weighted Dirichlet spaces. As a consequence, we see that the near-endpoints Carleson embedding is an efficient tool to study the $F(p, p\alpha-2, s)$ Carleson embedding problems when the continuous embedding $\calD_\beta^p \hookrightarrow L^q(\mu)$ is not available. This part serves as a detailed explanation for Figure \ref{Fig1} in Section \ref{20240627sec01}. 

First of all, note that by Remark \ref{20240626rem10}, it suffices to consider the cases when $p>1$ and $\alpha \ge 1$. Moreover, we will only consider the case when\footnote{The case when $s \ge 1$ is less interesting. Indeed, in such a situation, the $F(p, p\alpha-2, s)$ Carleson embedding problems can be resolved up to endpoints by just using the continuous embedding properties of the weighted Dirichlet spaces (and hence the interval $\left[1, 1+\frac{1-s}{p} \right]$ in Figure \ref{Fig1} will be squeezed into a single point $\alpha=1$ in this case).} $0<s<1$. There are three different situations. 

\vspace{-0.1in}

\subsection{Case I: when $\alpha=1$.}
\vspace{-0.1in}

\begin{center}
\begin{tabular}{|p{4cm}|p{5.5cm}|p{5.5cm}|}
\hline
\multicolumn{3}{|c|}{}\\
\multicolumn{3}{|c|}{Near-Endpoints Carleson embedding V.S. Logarithmic Carleson embedding} \\ 
\multicolumn{3}{|c|}{}\\
\hline
&Behavior of $\mu$ & Continuous embedding\\
\hline
& &\\
Near-Endpoints Carleson embedding & $s$-Carleson measure & $F(p, p-2, t) \hookrightarrow \calT_{s, p}^p(\mu)$ for any $t<s$   \\
$p=2$, Theorem \ref{20240613thm02} & $\left\|\mu \right\|_{\mathcal{CM}_s}:=\sup\limits_{I \subset \partial \D} \frac{\mu(S(I))}{|I|^s}$ & $\left\|f \right\|^p_{\calT_{s, p}^p}=\sup\limits_{I \subset \partial \D} \frac{\int_{S(I)}|f(z)|^pd\mu(z)}{|I|^s \left(\log{\frac{2}{|I|}}\right)^p}$ \\
$p>1$, Theorem \ref{20240624thm01} & &  \\
\hline
& & \\
Xiao-Pau-Zhao's Logarithmic Carleson embedding & logarithmic $s$-Carleson measure & $F(p, p-2, s) \hookrightarrow \frakT^\infty_{s, p}(\mu)$ \\
$p=2$, \cite[Theorem 1.1]{Xiaojie2008} & $\left\|\mu \right\|_{\mathcal{LCM}_s}=\sup\limits_{I \subseteq \partial \D}  \frac{\left(\log \frac{2}{|I|} \right)^p\mu(S(I))}{|I|^s}$ & $\left\|f \right\|^p_{\frakT_{s, p}^\infty}=\sup\limits_{I \subseteq \partial \D} \frac{\int_{S(I)} |f(z)|^p d\mu(z)}{|I|^s}$  \\
$p>1$, \cite[Theorem 3.1]{PauZhao2014} & & \\
\hline
\end{tabular}
\\
\vspace{0.08in}
Table I: Near-endpoints Carleson embeddings at $\alpha=1$.
\end{center} 

\medskip 
 
In this case, although Xiao-Pau-Zhao's logarithmic Carleson embedding contains the endpoint $s$, these two Carleson embeddings do \emph{not} imply each other.  

\vspace{-0.1in}

\subsection{Case II: when $1<\alpha<1+\frac{1-s}{p}$ for $p>2$, and when $1<\alpha \le 1+\frac{1-s}{p}$ for $1<p \le2$.}

In this case, the continuous embedding $\calD_{s+\alpha p-2}^p \hookrightarrow L^q(\mu)$ is \emph{not} available. To our best knowledge, no prior results on the $F(p, p\alpha-2, s)$ Carleson embedding are known in this regime. One possible solution in this case is to adapt the idea from Xiao-Pau-Zhao's logarithmic Carleson embedding. Here is how one can interpret such an extension. Recall that in the case when $\alpha=1$, the conjecture is given by 

\begin{conj} \label{20240627conj01}
For any $p>1$ and $0<s<1$, $id: F(p, p-2, s) \mapsto \calT_{s, p}^p(\mu)$ is bounded if and only if $\mu$ is an $s$-Carleson measure. 
\end{conj}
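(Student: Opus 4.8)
The plan is to run the argument of Theorem~\ref{20240624thm01} (whose $p=2$ case is Theorem~\ref{20240613thm02}) at the endpoint $t=s$, keeping careful track of the role played there by the strictly positive gain $\gamma=\tfrac{s-t}{p}$. The necessity half is unaffected: $f_{a_I}(z)=\log\frac{2}{1-\overline{a_I}z}$ satisfies $\sup_{a_I}\|f_{a_I}\|_{F(p,p-2,s)}\lesssim 1$, and since $|1-\overline{a_I}z|\simeq|I|$ on $S(I)$, feeding it into the embedding gives $\mu(S(I))\lesssim|I|^s$. For sufficiency one splits $\int_{S(I)}|f|^p\,d\mu$ into an $f(a_I)$-piece — controlled via $\|\mu\|_{\mathcal{CM}_s}$ and $|f(a_I)|\lesssim\|f\|_{\calB}\log\frac{2}{1-|a_I|}$ exactly as in \eqref{20240614eq03} — and the main piece, which after the substitution $g_I(z):=\tfrac{f(z)-f(a_I)}{(1-\overline{a_I}z)^{2s/p}}+f(a_I)\in\calB$ (with $g_I(0)=0$ once one reduces to $f(0)=0$) is treated through Lemma~\ref{20240614lem01}, yielding the familiar terms $\calN_1$ (carrying $f'$) and $\calN_2$ (carrying $f-f(a_I)$).

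The decisive point is $\calN_1$. Splitting the integrand of $\calN_1(f,I,z)^{1/p}$ as in \eqref{20240626eq03} and applying H\"older with exponents $p,p'$, the ``kernel factor'' degenerates at $\gamma=0$ to
$$
\left(\int_\D\frac{(1-|w|^2)^{\delta}}{\left|1-\overline{w}z\right|^{\delta+2}}\,dA(w)\right)^{p/p'},\qquad \delta=p'\Big(\beta-1+\tfrac{2}{p}-\tfrac{2s}{p}\Big),
$$
which is precisely the borderline case $b+c-\delta=2$ (with $c=0$) excluded in Lemma~\ref{20240614lem02}: by the Forelli--Rudin estimate it equals $\simeq\big(\log\tfrac{1}{1-|z|^2}\big)^{p/p'}$ rather than being bounded. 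After the Fubini step, and using the sharp (dyadically provable) logarithmically weighted Carleson estimate
$$
\int_{S(I)}\frac{\big(\log\frac{e}{1-|z|^2}\big)^{p/p'}}{\left|1-\overline{w}z\right|^{2s}}\,d\mu(z)\ \lesssim\ \|\mu\|_{\mathcal{CM}_s}\,\frac{\big(\log\frac{e}{1-|w|^2}\big)^{p/p'}}{(1-|w|^2)^{s}},
$$
the boundedness of the $\calN_1$-contribution reduces to the single inequality
$$
\frac{1}{\big(\log\frac{2}{|I|}\big)^{p}}\int_\D|f'(w)|^p(1-|w|^2)^{p-2}\big(1-|\varphi_{a_I}(w)|^2\big)^{s}\Big(\log\tfrac{e}{1-|w|^2}\Big)^{p/p'}\!dA(w)\ \overset{?}{\lesssim}\ \|f\|^p_{F(p,p-2,s)},
$$
and the $\calN_2$-contribution reduces to an entirely analogous estimate; I would try to establish it by decomposing $\D$ into dyadic annuli $A_L=\{1-|w|^2\simeq 2^{-L}\}$, hoping the logarithm is harmless when $L\simeq\log\frac{2}{|I|}$ and that the weight $(1-|\varphi_{a_I}|^2)^s$ produces decay on much deeper annuli.

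The main obstacle is exactly this last inequality, and I expect it to \emph{fail}. If $f$ is chosen so that $|f'(w)|^p(1-|w|^2)^{p-2}$ is spread uniformly over one annulus $A_L$ with $L$ much larger than $\log\frac{2}{|I|}$, then $\|f\|^p_{F(p,p-2,s)}$ is comparable to the value of the defining integral at $a=0$ (the competing choices $a\neq 0$ only make it smaller), whereas the left-hand side acquires the full factor $(L+1)^{p-1}$ from the annulus being deep, with the remaining quantities depending only on $|I|$; since $L$ is unrelated to $|I|$, no choice of implied constant works. In other words, the borderline logarithm forced at $t=s$ cannot be absorbed by the $s$-Carleson hypothesis together with the $(\log\frac{2}{|I|})^{-p}$ normalisation built into $\calT_{s,p}^p$ — this is precisely the failure that the extra room $\gamma>0$ (equivalently the weight $(1-|\varphi|^2)^{s-t}$) repairs in Theorem~\ref{20240624thm01}. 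A proof of Conjecture~\ref{20240627conj01} therefore seems to require a genuinely different ingredient: either a tree/dyadic model for $F(p,p-2,s)$ combined with a suitably bumped two-weight testing inequality, or a reduction to upper-half Carleson boxes in the spirit of \cite{LV2014}, where only the single scale $\tfrac1{|I|}$ intervenes.
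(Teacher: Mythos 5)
This statement is Conjecture \ref{20240627conj01}: the paper does \emph{not} prove it. It is precisely the open endpoint case $t=s$ that the paper deliberately avoids; what the paper proves are the near-endpoint results (Theorems \ref{20240613thm02} and \ref{20240624thm01}, requiring $t<s$), together with the remarks that at $t=s$ one either must pass to logarithmic $s$-Carleson measures in the style of Xiao and Pau--Zhao, or restrict to $0<p\le 1$ (Remark \ref{20240624rem01}), because for $0<s<1$ the equivalence between $\mu$ being $s$-Carleson and the continuous embedding $\calD^p_{s+p-2}\hookrightarrow L^p(\mu)$ breaks down (Arcozzi--Rochberg--Sawyer). So your proposal cannot be judged against a proof in the paper; judged on its own terms it is not a proof either, and you say so yourself: at $\gamma=0$ the application of Lemma \ref{20240614lem02} sits exactly on the excluded borderline $b+c-\delta=2$, the kernel factor degenerates to a power of $\log\frac{e}{1-|z|^2}$, and you are left with an inequality you expect to be false. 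Your diagnosis of \emph{where} the method dies is accurate and coincides with the paper's own rationale for introducing the gain $\gamma=\frac{s-t}{p}>0$: that tiny power of $\frac{1}{|1-\overline{a_I}w|}$ is exactly what lets Lemma \ref{20240614lem02} return an estimate at the single scale $\frac{1}{|I|}$ instead of a logarithm living at the scale $\frac{1}{1-|z|}$.

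Two cautions about the part of your proposal that goes beyond the diagnosis. First, the failure of your particular chain of sufficient estimates (H\"older with the $\gamma=0$ split, then the log-weighted Carleson bound) would not falsify Conjecture \ref{20240627conj01}; the conjecture's status is genuinely open, and your heuristic counterexample only defeats one intermediate inequality, not the embedding itself. Second, that heuristic is itself shaky: the claim that the supremum defining $\|f\|_{F(p,p-2,s)}$ is essentially attained at $a=0$ for a function whose derivative mass concentrates on a deep annulus is unjustified, since $(1-|\varphi_a(w)|^2)^s$ can be of order $1$ (much larger than $(1-|w|^2)^s$) when $a$ is taken inside that annulus, so the competing choices $a\neq 0$ need not be smaller. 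Your closing suggestions (a dyadic/tree model with a two-weight testing inequality, or a reduction to upper-half Carleson boxes as in \cite{LV2014}) are reasonable directions, but they are programmatic; nothing in the proposal, or in the paper, settles the endpoint case.
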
 

Theorem \ref{20240624thm01} asserts that the above conjecture holds for a near-endpoints Carleson embedding $F(p, p-2, t) \hookrightarrow \calT_{s, p}^p$ as long as $t<s$. In Xiao-Pau-Zhao's solution, they showed that the endpoint $t=s$ can be achieved, however, one has to adjust the definition by moving the logarithmic term $\left(\log \frac{2}{|I|} \right)^p$ into the definition of the Carleson measures. 

Therefore, for $\alpha>1$, following \cite[Theorem 4.4]{PauZhao2014}, the analog of Conjecture \ref{20240627conj01} can be stated as follows.

\begin{conj} \label{20240627conj02}
For any $p>1, \alpha>1$ and $0<s<1$, $id: F(p, p\alpha-2, s) \mapsto \frakT_{s, p}^\infty(\mu)$ is bounded if and only if $\mu$ is an $[s+p(\alpha-1)]$-Carleson measure. 
\end{conj}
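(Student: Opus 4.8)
The conjecture is precisely the endpoint case $t=s$ of Theorem~\ref{20240624thm10}, so the natural plan is to re-run the argument of Section~3 and pin down exactly where $t<s$ is used. Writing $\gamma=\frac{s-t}{p}$, every appeal to $t<s$ is channelled through $\gamma>0$: it is $\gamma$ that supplies the ``tiny power'' $\left|1-\overline{a_I}w\right|^{-2p'\gamma}$ used in the H\"older split \eqref{20240626eq03} and, through Lemma~\ref{20240614lem02} with $b+c-\delta=2+p'\gamma>2$, produces the clean single-scale bound \eqref{20240626eq10}; and it is the same $\gamma$ that turns the leftover factor $(1-|\varphi_{a_I}(w)|^2)^{s-p\gamma}=(1-|\varphi_{a_I}(w)|^2)^{t}$ into the $F(p,p\alpha-2,t)$-norm after Fubini (see \eqref{20240626eq51}--\eqref{20240626eq52}). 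At $\gamma=0$ the exponent inequalities of Lemma~\ref{20240614lem02} collapse to the critical equality $b+c-\delta=2$, and \eqref{20240626eq10} is no longer $\simeq 1$ but $\simeq\log\frac{e}{\left|1-\overline{a_I}z\right|}$. For $z\in S(I)$ this injects a factor $\bigl(\log\frac{e}{|I|}\bigr)^{p-1}$ into the bound for $\calM_I^{1,1}(f)$, so the method delivers $F(p,p\alpha-2,s)\hookrightarrow$ a \emph{logarithmic} tent space rather than $\frakT_{s,p}^\infty(\mu)$ --- which is exactly the Xiao--Pau--Zhao analog (Statement~\ref{20240627stat01}), not the conjecture.

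Hence the heart of the matter is the removal of that $\bigl(\log\frac{e}{|I|}\bigr)^{p-1}$, and this is where I expect the real obstacle to lie: the logarithm arises at the level of the \emph{size} estimate Lemma~\ref{20240614lem02} (a Forelli--Rudin-type bound), and killing it necessarily requires extracting cancellation from the integral representation of Lemma~\ref{20240614lem01} that is invisible to absolute values. This is the same phenomenon that keeps the Liu--Lou--Zhu Conjecture~\ref{20240614conj01} open at $\alpha=1$. The only other known route --- controlling $\calM_I^{1,1}$ via the continuous embedding $\calD_{s+p\alpha-2}^p\hookrightarrow L^p(\mu)$, as in \cite[Theorem 4.4]{PauZhao2014} --- is unavailable precisely when $s+p(\alpha-1)<1$, that is $1<\alpha<1+\frac{1-s}{p}$, because there the relevant weighted Dirichlet embedding with $p=q$ is itself an open problem (cf.\ \cite{Arcozzi2002, Girela2006}). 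So in that regime both available methods break at the endpoint, and a genuinely new ingredient seems necessary; the most plausible candidate is a stopping-time/testing-condition argument tailored to $F(p,p\alpha-2,s)$ and run directly on the derivative of $\frac{f(z)-f(a_I)}{(1-\overline{a_I}z)^{2s/p+2(\alpha-1)}}+f(a_I)$.

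Two more speculative but concrete attempts are worth recording. First, one could make Theorem~\ref{20240624thm10} quantitative in $\gamma$ --- from the implicit constants in Lemma~\ref{20240614lem02} and the Fubini steps \eqref{20240626eq51}--\eqref{20240626eq52} the embedding constant seems to degrade only polynomially as $\gamma\downarrow 0$ --- and then try a limiting argument: decompose $f\in F(p,p\alpha-2,s)$ into pieces each lying in some $F(p,p\alpha-2,t)$ with $t$ a definite amount below $s$, apply the quantitative near-endpoints bound to each piece, and sum. The difficulty is that the extremals $f_{a}(z)=(1-\overline{a}z)^{1-\alpha}$ that saturate $F(p,p\alpha-2,s)$ already lie, with comparable norm, in $F(p,p\alpha-2,t)$ for every $t<s$, so the gain from shrinking $t$ and the loss in the constant appear to cancel exactly --- again signalling that the conjecture sits right at the threshold of current technology. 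Second, since the logarithm ultimately comes from the several competing scales $\frac{1}{1-|z|}$, $z\in S(I)$, one could instead pass to Lv's single-scale formulation \cite{LV2014} at $\alpha>1$ (Carleson measures and tent spaces over upper-half boxes only), where $\gamma>0$ is not forced, and then try to transfer the resulting embedding back to the full Carleson boxes $S(I)$.
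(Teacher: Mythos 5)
You have not proved the statement, and neither does the paper: Conjecture \ref{20240627conj02} is stated there as an open problem, with no proof offered. What the paper actually establishes is the near-endpoints version (Theorem \ref{20240624thm10}, requiring $t<s$), together with the observation that the Xiao--Pau--Zhao analog, Statement \ref{20240627stat01}, is the special case $t=s-p(\alpha-1)$, and that the endpoint $t=s$ is only reachable by the Dirichlet-embedding route when $s+p(\alpha-1)\ge 1$. Your diagnosis of where the argument of Section 3 uses $t<s$ is accurate and agrees with the paper's own positioning: the strict inequality $b+c-\delta=2+p'\gamma>2$ in Lemma \ref{20240614lem02} is exactly what yields the clean bound \eqref{20240626eq10}, and the surviving weight $(1-|\varphi_{a_I}(w)|^2)^{s-p\gamma}$ is what produces the $F(p,p\alpha-2,t)$-norm; at $\gamma=0$ the critical case of the integral estimate brings in a logarithm and the method no longer closes. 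Your remark that the Dirichlet embedding $\calD_{s+p\alpha-2}^p\hookrightarrow L^p(\mu)$ is unavailable precisely when $1<\alpha<1+\frac{1-s}{p}$ is also consistent with the paper's Case II discussion. So, as an assessment of the state of the problem, your proposal is essentially correct; as a proof of the conjecture it is, by your own admission, not one, and the limiting-in-$\gamma$ and single-scale (Lv-type) strategies you sketch remain speculative.

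One imprecision worth flagging: at $\gamma=0$ the logarithmic loss you describe would give an embedding of $F(p,p\alpha-2,s)$ into a \emph{log-weighted} tent space, which is not Statement \ref{20240627stat01}. In the paper, Statement \ref{20240627stat01} is log-free and arises not from running the argument at $\gamma=0$ but from applying Theorem \ref{20240624thm10} with the strictly smaller index $t=s-p(\alpha-1)<s$; the genuinely logarithmic phenomenon (log-Carleson measures or log-weighted tent spaces at the endpoint) is the $\alpha=1$ situation of Xiao and of Pau--Zhao. So the correct reading is that at $\alpha>1$ the endpoint case $t=s$ is simply open, with no known log-corrected substitute, which is exactly why the paper records it as a conjecture.
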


Or equivalently, 

\begin{namedthm*}{Conjecture \ref{20240627conj02}$^\prime$} 
For any $p>1, \alpha>1$ and $0<s<1$ satisfying $s>p(\alpha-1)$, $id: F(p, p\alpha-2, s-p(\alpha-1)) \mapsto \frakT_{s-p(\alpha-1), p}^\infty(\mu)$ is bounded if and only if $\mu$ is an $s$-Carleson measure.     
\end{namedthm*}

Note that in this case, the role of $\left(\log \frac{2}{|I|} \right)^p$ is replaced by $\left(\frac{1}{|I|^{\alpha-1}} \right)^p$, and therefore one can write
$$
\left\|f \right\|^p_{\frakT_{s-p(\alpha-1), p}^{\infty}}=\sup_{I \subseteq \partial \D} \frac{1}{|I|^s \cdot \left(\frac{1}{|I|^{\alpha-1}} \right)^p} \int_{S(I)} |f(z)|^p d\mu(z). 
$$
Let us now move the term $\left(\frac{1}{|I|^{\alpha-1}} \right)^p$ into the definition of the Carleson measures, and this suggests the Xiao-Pau-Zhao's analog for $\alpha>1$ can be interpreted as 

\begin{namedthm}{Statement}[Xiao-Pau-Zhao's analog for $\alpha>1$] \label{20240627stat01}
For any $p>1, \alpha>1$ and $0<s<1$ satisfying $s>p(\alpha-1)$, $id: F(p, p\alpha-2, s-p(\alpha-1)) \mapsto \frakT_{s, p}^\infty(\mu)$ is bounded if and only if $\mu$ is an $\left[s+p(\alpha-1)\right]$-Carleson measure.   
\end{namedthm}

However, Statement \ref{20240627stat01} is a special case of Theorem \ref{20240624thm10}, which asserts under the same assumptions, $F(p, p\alpha-2, t) \hookrightarrow \frakT_{s, p}^\infty(\mu)$ is continuous if and only if $\mu$ is an $[s+p(\alpha-1)]$-Carleson measure for any $t<s$. Therefore, near-endpoints Carleson embedding is more efficient compared to Xiao-Pau-Zhao's analog when $\alpha>1$. We summarize as follows.

\begin{center}
\begin{tabular}{|p{4cm}|p{5.5cm}|p{5.5cm}|}
\hline
\multicolumn{3}{|c|}{}\\
\multicolumn{3}{|c|}{Near-Endpoints Carleson embedding V.S. Xiao-Pau-Zhao's analog for $\alpha>1$} \\ 
\multicolumn{3}{|c|}{}\\
\hline
&Behavior of $\mu$ & Continuous embedding\\
\hline
& &\\
Near-Endpoints Carleson embedding & $s+p(\alpha-1)$-Carleson measure & $F(p, p\alpha-2, t) \hookrightarrow \frakT_{s, p}^\infty(\mu)$ for any $t<s$   \\
& &\\
Theorem \ref{20240624thm10} & $\left\|\mu \right\|_{\mathcal{CM}_{s+p(\alpha-1)}}:=\sup\limits_{I \subset \partial \D} \frac{\mu(S(I))}{|I|^{s+p(\alpha-1)}}$ & $\left\|f \right\|^p_{\frakT_{s, p}^\infty}=\sup\limits_{I \subset \partial \D} \frac{\int_{S(I)}|f(z)|^pd\mu(z)}{|I|^s}$ \\
& &\\
\hline
& & \\
Xiao-Pau-Zhao's analog for $\alpha>1$ & $s+p(\alpha-1)$-Carleson measure & $F(p, p-2, s-p(\alpha-1)) \hookrightarrow \frakT^\infty_{s, p}(\mu)$ \\
& &\\
Statement \ref{20240627stat01} & $\left\|\mu \right\|_{\mathcal{CM}_{s+p(\alpha-1)}}=\sup\limits_{I \subseteq \partial \D}  \frac{\mu(S(I))}{|I|^{s+p(\alpha-1)}}$ & $\left\|f \right\|^p_{\frakT_{s, p}^\infty}=\sup\limits_{I \subseteq \partial \D} \frac{\int_{S(I)} |f(z)|^p d\mu(z)}{|I|^s}$  \\
& & \\
\hline
\end{tabular}
\\
\vspace{0.08in}
Table II: Near-endpoints Carleson embeddings for $1<\alpha<1+\frac{1-s}{p}$ if $p>2$ and $1<\alpha \le 1+\frac{1-s}{p}$ if $1<p \le 2$.
\end{center} 

\subsection{Case III: when $\alpha>1+\frac{1-s}{p}$  for $p>2$, and when $\alpha \ge 1+\frac{1-s}{p}$ for $1<p \le 2$.}

In this case, the continuous embedding $\calD_{s+p\alpha-2}^p \hookrightarrow L^p(\mu)$ is valid. Therefore, the endpoint $t=s$ can be achieved, and hence, in this situation, the near-endpoints Carleson embedding is not as good as the Carleson embedding induced by the continuous embedding of weighted Dirichlet spaces. 

\begin{center}
\begin{tabular}{|p{4.3cm}|p{5.5cm}|p{5.5cm}|}
\hline
\multicolumn{3}{|c|}{}\\
\multicolumn{3}{|c|}{Near-Endpoints Carleson embedding V.S. $\calD_{s+p\alpha-2}^p \hookrightarrow L^p(\mu)$} \\ 
\multicolumn{3}{|c|}{}\\
\hline
&Behavior of $\mu$ & Continuous embedding\\
\hline
& &\\
Near-Endpoints Carleson embedding & $s+p(\alpha-1)$-Carleson measure & $F(p, p\alpha-2, t) \hookrightarrow \frakT_{s, p}^\infty(\mu)$ for any $t<s$   \\
& & \\
Theorem \ref{20240624thm10} & $\left\|\mu \right\|_{\mathcal{CM}_{s+p(\alpha-1)}}=\sup\limits_{I \subset \partial \D} \frac{\mu(S(I))}{|I|^{s+p(\alpha-1)}}$ & $\left\|f \right\|^p_{\frakT_{s, p}^\infty}=\sup\limits_{I \subset \partial \D} \frac{\int_{S(I)}|f(z)|^pd\mu(z)}{|I|^s}$ \\
& &\\
\hline
& & \\
Carleson embedding induced by $\calD_{s+p\alpha-2}^p \hookrightarrow L^p(\mu)$ & $s+p(\alpha-1)$-Carleson measure & $F(p, p\alpha-2, s) \hookrightarrow \frakT^\infty_{s, p}(\mu)$ \\
& & \\
\cite[Theorem 4.4]{PauZhao2014} & $\left\|\mu \right\|_{\mathcal{CM}_{s+p(\alpha-1)}}=\sup\limits_{I \subseteq \partial \D}  \frac{\mu(S(I))}{|I|^{s+p(\alpha-1)}}$ & $\left\|f \right\|^p_{\frakT_{s, p}^\infty}=\sup\limits_{I \subseteq \partial \D} \frac{\int_{S(I)} |f(z)|^p d\mu(z)}{|I|^s}$  \\
& & \\
\hline
\end{tabular}
\\
\vspace{0.08in}
Table III: Near-endpoints Carleson embeddings for $\alpha>1+\frac{1-s}{p}$ if $p>2$ and $\alpha  \ge 1+\frac{1-s}{p}$ if $1<p \le 2$.
\end{center}

\bigskip

\end{document}